\documentclass[12pt,reqno]{amsart}
\usepackage{amsmath, amsthm, amscd, amsfonts, amssymb, xcolor}
\usepackage[bookmarksnumbered, colorlinks, plainpages]{hyperref}
\usepackage{color}	
\usepackage{appendix}
\usepackage{graphicx}
\usepackage{subfigure}
\usepackage{caption}
\usepackage{subcaption} 
\usepackage{todonotes}

\newcommand{\PB}{\overline{p}}
\newcommand{\CINF}{C_\infty}

\newcommand{\PAE}{\left( }
\newcommand{\PAD}{\right) }
\newcommand{\QUE}{\left[ }
\newcommand{\QUD}{\right] }
\newcommand{\CHE}{\left\{ }
\newcommand{\CHD}{\right\} }
\newcommand{\POE}{\left< }
\newcommand{\POD}{\right> }

\newcommand{\ULAPLA}{\Delta_1}
\newcommand{\PLAPLA}{\Delta_p}
\newcommand{\QLAPLA}{\Delta_q}
\newcommand{\INTO}{\int_\Omega}
\newcommand{\IINTO}{\displaystyle\int_\Omega}
\newcommand{\DIVE}{\mbox{div}}
\newcommand{\DX}{\hspace{0.05cm}\mbox{d}x}
\newcommand{\MOE}{\left| }
\newcommand{\MOD}{\right| }
\newcommand{\NOE}{\left\|  } 
\newcommand{\NOD}{\right\|  }

\newcommand{\NODWUPZ}{\right\|_{\scriptscriptstyle W^{1,p}_0}}
\newcommand{\NODWUQZ}{\right\|_{\scriptscriptstyle W^{1,q}_0}}

\newcommand{\NODLR}{\right\|_{\scriptscriptstyle L^r}}

\newcommand{\NODLQ}{\right\|_{\scriptscriptstyle L^q}}

\newcommand{\NODLINF}{\right\|_{\scriptscriptstyle L^\infty}}

\newcommand{\POmega}{(\Omega)}
\newcommand{\CR}{\mathbb{R}}

\newcommand{\ESPWUQZ}{W^{1,q}_0}
\newcommand{\ESPWUPZ}{W^{1,p}_0}
\newcommand{\ESPWUSZ}{W^{1,s}_0}

\newcommand{\ESPLP}{L^{p}}

\newcommand{\ESPLS}{L^{s}}
\newcommand{\ESPLR}{L^{r}}

\newcommand{\ESPLINF}{L^{\infty}}
\newcommand{\ESPCINFZ}{C^\infty_0}
\newcommand{\ESPCINFC}{C^\infty_c}
\newcommand{\ESPCUC}{C^1_c}

\newcommand{\ESPBV}{BV}

\newcommand{\MI}{\geq}
\newcommand{\mI}{\leq}

\newcommand{\IFF}{\Leftrightarrow}

\newcommand{\FUNUZ}{u_{\scriptscriptstyle 0}}
\newcommand{\FUNUB}{u_{\beta}}
\newcommand{\FUNUBZ}{u_{\beta_0}}
\newcommand{\FUNUPB}{u_{p,\beta}}
\newcommand{\RHOPB}{\rho_{p,\beta}}
\newcommand{\RHOB}{\rho_{\beta}}
\newcommand{\RHOZ}{\rho_{0}}
\newcommand{\FUNJPB}{\mathcal{J}_{p,\beta}}
\newcommand{\FUNFB}{\mathcal{F}_{\beta}}
\newcommand{\FUNPB}{\Phi_{p,\beta}}

\newcommand{\CAMZB}{\mathbf{z}_\beta}
\newcommand{\CAMZ}{\mathbf{z}}
\newcommand{\CAMZZ}{\mathbf{z}_{\scriptscriptstyle 0}}
\theoremstyle{definition}
\newtheorem{THEO}{Theorem}
\newtheorem{LEM}{Lemma}
\newtheorem{DEFIN}{Definition}
\newtheorem{PROP}{Proposition}
\begin{document}
\title[Existence of solutions $(1,q)$-Laplacian]{Existence of solutions for elliptic problems involving the $(1,q)$-Laplacian operator and a discontinuous superlinear nonlinearity}

\author[M. A. V. Costa]{Marcos A. V. Costa}
\author[O. H. Miyagaki]{Olímpio H. Miyagaki}
\author[M. T. O. Pimenta]{Marcos T. O. Pimenta}
	
\address[Marcos Antonio Viana Costa]
    {\newline\indent Universidade Federal de São Carlos - UFSCar
    \newline\indent Departamento de Matemática
	\newline\indent São Carlos -- SP -- Brazil}
	\email{\href{marcos.viana@ufscar.br}{marcos.viana@ufscar.br}}

\address[Olimpio Hiroshi Miyagaki]
    {\newline\indent Universidade Federal de São Carlos - UFSCar
    \newline\indent Departamento de Matemática
	\newline\indent São Carlos -- SP -- Brazil}
	\email{\href{olimpio@ufscar.br }{olimpio@ufscar.br }}

\address[Marcos Tadeu Oliveira Pimenta]
	{\newline\indent Universidade Estadual Paulista
	\newline\indent Departamento de Matemática e Computação
    \newline\indent Presidente Prudente -- SP -- Brazil}
	\email{\href{marcos.pimenta@unesp.br}{marcos.pimenta@unesp.br}}
\pretolerance10000
\begin{abstract}
\noindent  IIn this paper, we study a class of quasilinear elliptic problems involving the $(1,q)-$Laplacian operator and a discontinuous superlinear nonlinearity governed by the Heaviside function. The main difficulty of the problem arises from the presence of the $1$-Laplacian operator, whose natural setting is the Space of Functions of Bounded Variation. Our approach is based on an approximation method involving $(p,q)-$Laplacian problems as $p\to1^+$. As a consequence, we prove the existence of a nontrivial and nonnegative solution belonging to $\ESPWUPZ\POmega$, in an appropriate weak sense. Moreover, we investigate the asymptotic behavior of the solutions as $\beta\to0^+$, showing that the family of solutions converges to a solution of the limit problem without discontinuity.
\end{abstract}
\thanks{ Marcos A. V. Costa has been supported by FAPESP 2025/08701-5. Marcos T. O. Pimenta has been supported by FAPESP 2023/06617-1 and CNPq 303868/2024-4. Olimpio H. Miyagaki has been supported by FAPESP 2022/16407-1 and CNPq 303256/2022-2.}
\subjclass[2020]{26B30, 35B40, 35J60, 35J62}
\keywords{$(1,q)$-Laplacian Operator; Functions of Bounded Variation; Discontinuous nonlinearities}
\maketitle
\section{Introduction}
In this paper, we deal with the solutions of the following quasilinear elliptic problem involving the $(1,q)$-Laplacian
\begin{align}\label{P}\tag{P}
\begin{cases}\begin{array}{rll}
    -\ULAPLA u-\QLAPLA u=&\hspace*{-0.25cm} H(u-\beta)\MOE u\MOD^{s-2}u &\mbox{ in }\Omega,			\\
    u=&\hspace*{-0.25cm}0 &\mbox{ on }\partial\Omega,
\end{array}\end{cases}
\end{align}
where the $1$-Laplacian operator is formally defined by $\ULAPLA=\DIVE\PAE\frac{Du}{\MOE Du\MOD}\PAD$, $\Omega$ is a bounded domain with Lipschitz boundary in $\CR^N$, with $N\MI 2$, $\beta > 0$ is a real parameter, $1 < p < q < s<q^*=\frac{Nq}{N-q}$, and $H$ denotes the Heaviside function, which is defined as $H(t)=1$ when $t\MI 0$ and $H(t)=0$ otherwise.\newline

In the last three decades, problems involving the $1$-Laplacian operator have gained great interest and were subject of several research activities using different techniques. This operator, formally defined by
\begin{align*}
    \ULAPLA u:=\DIVE\PAE\dfrac{\nabla u}{\MOE\nabla u\MOD}\PAD,
\end{align*}
naturally leads to the study of functions of bounded variation, where the lack of reflexivity introduces significant technical challenges. For instance, Chata, Pimenta and Segura de León investigated in \cite{ChataPimentaSegura2023} the existence of multiple positive solutions for a concave–convex type problem involving the $1-$Laplacian by means of approximation techniques. In \cite{PimentaSantosStapenhorst2024}, Pimenta, Santos Júnior and Stapenhorst studied a quasilinear elliptic problem with discontinuous and critical nonlinearities, combining approximation methods with concentration-compactness arguments. Moreover, Pimenta, Carranza and Figueiredo in \cite{PimentaCarranzaFigueiredo2024} obtained existence results for problems with critical concave–convex nonlinearities via concentration-compactness techniques. Finally, Martínez Aparicio, Oliva and Petitta developed in \cite{AparicioOlivaPetitta2025} a sub-supersolution method for $1-$Laplacian type problems, highlighting additional difficulties related to comparison principles and compactness issues.

Several approaches have been developed to deal with such problems. One of the most widely used methods, (which was originally proposed in \cite{AndreuBallesterCasellesMazon1,AndreuBallesterCasellesMazon2,AndreuBallesterCasellesMazon3,Demengel}), consists of approximating the problem by means of $p$-Laplacian as $p\to1^+$, allowing the use of classical variational methods. This strategy has been successfully applied in various contexts, including in problems with power type nonlinearities (as in \cite{MolinoSegura2018}) and with discontinuous nonlinearities (as in \cite{PimentaSantosStapenhorst2024,SantosFigueiredoPimenta2022}).

More recently, Figueiredo, Pimenta and Winkert in \cite{FigueiredoPimentaWinkert2025} proposed the study of the $(p,q)-$Laplacian operator combining operators with different degrees of homogeneity. The limiting case $p\to 1^+$ leads to the $(1,q)-$Laplacian operator, whose analysis is even more delicate due to the interplay between the nonsmooth nature of the $1-$Laplacian and the regularizing effect of the $q-$Laplacian.

In recent years, a considerable amount of work has been devoted to elliptic problems involving the $(p,q)-$Laplacian operator, addressing issues such as existence, multiplicity of solutions, qualitative properties and asymptotic behavior. We refer, for instance, to the works \cite{ChavesErcoleMiyagaki2015,FariaMiyagakiMotreanu2014,MotreanuWinkert2019,PapageorgiouWinkert2021}, where different techniques have been successfully employed to treat superlinear and subcritical nonlinearities. However, despite these advances, problems involving the $(1,q)-$Laplacian setting remain largely unexplored.

Motivated by this context, and inspired by the problem proposed by Pimenta, Santos, and Santos Júnior in \cite{PimentaJunior2024}, the present work investigates an elliptic problem involving the $(1,q)-$Laplacian operator with a discontinuous Heaviside-type nonlinearity, contributing to the development of the theory by combining approximation methods, nonsmooth analysis, and the theory of divergence-measure fields.\newline

In this context, we aim to obtain solutions to the main problem of this paper in the following sense:
\begin{DEFIN}\label{DEFINSOL}
    We say that a function $u$ is a \textbf{bounded variation solution} to Problem \eqref{P} when there exist a function $\rho\in L^\frac{s}{s-1}\POmega$ and a vector field $\textbf{z}\in L^\infty(\Omega,\mathbb{R}^N)$, with $\NOE\textbf{z}\NODLINF\mI 1$, such that
    \begin{align}\begin{cases}
    -\DIVE\PAE\CAMZ\PAD-\QLAPLA u=\rho\hspace{1.5cm}\mbox{ in }\mathcal{D'}\POmega,\\
    \hspace{1.63cm}\CAMZ\cdot\nabla u=\MOE \nabla u\MOD\hspace{0.83cm}\mbox{ a.e. in }\Omega,\\
    \hspace{2.5cm}u=0\hspace{1.5cm}\mbox{ on }\partial\Omega,
    \end{cases}
    \end{align}
    Moreover, the function $\rho$ satisfies, for almost every $x\in\Omega$,
    \begin{align}
    \rho(x)\in
    \begin{cases}
        \{0\},                  &\mbox{ when }u(x)<\beta,\\
        [0,\beta^{s-1}],        &\mbox{ when }u(x)=\beta,\\
        \{u(x)^{s-1}\},   &\mbox{ when }u(x)>\beta.
    \end{cases}
\end{align}
\end{DEFIN}

In this way, we establish the following results by means of the method proposed by Figueiredo, Pimenta and Winkert in \cite{FigueiredoPimentaWinkert2025}:
\begin{THEO}\label{THEO1}
    Assume that $N\MI 2$ and $1<q<s<q^*$. For each $\beta>0$, Problem \eqref{P} admits at least one nonnegative and nontrivial solution $\FUNUB\in\ESPWUQZ\POmega$ in the sense of Definition \ref{DEFINSOL}.
\end{THEO}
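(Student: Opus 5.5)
We approximate \eqref{P} by $(p,q)$-Laplacian problems with $1<p<q$ and let $p\to1^+$, following \cite{FigueiredoPimentaWinkert2025}. For fixed $p$ we study
\begin{align*}
-\PLAPLA u-\QLAPLA u = H(u-\beta)\MOE u\MOD^{s-2}u \ \mbox{ in }\Omega,\qquad u=0 \mbox{ on }\partial\Omega,
\end{align*}
with the locally Lipschitz functional on $\ESPWUQZ\POmega$
\begin{align*}
\FUNJPB(u)=\frac1p\INTO\MOE\nabla u\MOD^p\DX+\frac1q\INTO\MOE\nabla u\MOD^q\DX-\INTO G_\beta(u)\DX,
\end{align*}
where $G_\beta(t)=\frac1s\PAE(t^+)^s-\beta^s\PAD^+$ is a primitive of $H(t-\beta)(t^+)^{s-1}$. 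It is continuous and nonsmooth only through $G_\beta$. We apply Chang's nonsmooth mountain pass theorem. Near $0$, $G_\beta\equiv0$ for $\MOE t\MOD<\beta$, and $G_\beta(t)\le C\MOE t\MOD^s$ with $s>q$. Together with Sobolev, this gives $\FUNJPB(u)\ge c\NOE u\NODWUQZ^q - C\NOE u\NODWUQZ^s\ge \alpha>0$ on a small sphere, with $\alpha$ independent of $p\in(1,q)$ since the $p$-term is dropped. Along $te$ with $e\ge0$ fixed we get $\FUNJPB(te)\to-\infty$, uniformly for $p$ near $1$.

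Palais--Smale sequences are bounded by the usual $\FUNJPB(u_n)-\frac1s\POE\rho_n,u_n\POD$ argument, using $\rho_n\in\partial G_\beta(u_n)$ and $s>q>p$. Compactness then follows from the $(S_+)$ property of $-\PLAPLA-\QLAPLA$ and compact embedding since $s<q^*$. This yields critical points $\FUNUPB$ with $\FUNJPB(\FUNUPB)=c_p\ge\alpha$ and $\rho_p\in[\underline{f},\overline{f}](\FUNUPB)$, i.e.\ $\rho_p$ lies in the generalized gradient interval of the discontinuous nonlinearity. Testing with $u^-$ gives $\FUNUPB\ge0$, and $c_p>0$ gives nontriviality. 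Here $c_p\le\max_t \mathcal{J}_{q^-,\beta}(te)$ is bounded uniformly in $p$.

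Next we derive uniform estimates. From $c_p=\FUNJPB(\FUNUPB)-\frac1s\POE\rho_p,\FUNUPB\POD$ we get $\INTO\MOE\nabla\FUNUPB\MOD^q+\INTO\MOE\nabla\FUNUPB\MOD^p\le C$ independent of $p$. We extract $\FUNUPB\rightharpoonup \FUNUB$ in $\ESPWUQZ$ and strongly in $L^s$, and $\rho_p\rightharpoonup\rho$ in $L^{s/(s-1)}$. The closedness of the graph of the Heaviside multivalued map gives the pointwise inclusion for $\rho$, via Mazur plus a.e.\ convergence, separately on $\{\FUNUB>\beta\}$, $\{\FUNUB<\beta\}$ and $\{\FUNUB=\beta\}$. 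For the field, $\MOE\nabla\FUNUPB\MOD^{p-2}\nabla\FUNUPB$ is bounded in $L^{r}$ for every $r<\infty$ by Hölder (as in Andreu--Mazón--Caselles), so it converges weakly to some $\CAMZ$ with $\NOE\CAMZ\NODLINF\le1$. Testing the approximate equation passes to the limit: $-\DIVE\CAMZ-\QLAPLA\FUNUB=\rho$ in $\mathcal{D}'$, provided $\MOE\nabla\FUNUPB\MOD^{q-2}\nabla\FUNUPB$ converges. For that we need $\nabla\FUNUPB\to\nabla\FUNUB$ a.e., or strongly in $L^q$. We prove it by testing the difference of equations with $\FUNUPB-\FUNUB$ (or truncations $T_k(\FUNUPB-\FUNUB)\varphi$) and using monotonicity of the $q$-Laplacian. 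The $p$-term is controlled because $\INTO\MOE\nabla\FUNUPB\MOD^{p-2}\nabla\FUNUPB\cdot\nabla\FUNUB\to\INTO\CAMZ\cdot\nabla\FUNUB$ and $\liminf\INTO\MOE\nabla\FUNUPB\MOD^p\ge\INTO\MOE\nabla\FUNUB\MOD\ge\INTO\CAMZ\cdot\nabla\FUNUB$.

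Then we identify $\CAMZ\cdot\nabla\FUNUB=\MOE\nabla\FUNUB\MOD$. We test the limit equation with $\FUNUB\varphi$ (legitimate via Anzellotti pairing since $\FUNUB\in\ESPWUQZ$) and compare with the approximate identities tested by $\FUNUPB\varphi$, using lower semicontinuity $\INTO\varphi\MOE\nabla\FUNUB\MOD\le\liminf\INTO\varphi\MOE\nabla\FUNUPB\MOD^p$ (Young: $\MOE\xi\MOD\le\frac1p\MOE\xi\MOD^p+\frac{p-1}{p}$). This gives $\INTO\varphi\MOE\nabla\FUNUB\MOD\le\INTO\varphi\,\CAMZ\cdot\nabla\FUNUB$; the reverse holds since $\MOE\CAMZ\MOD\le1$. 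The boundary condition comes for free from $\FUNUB\in\ESPWUQZ$, thanks to the $q$-term.

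Finally, nontriviality. We need $\FUNUB\ne0$. I would show $\NOE\FUNUPB\NOE_{L^\infty}>\beta$ and then a lower bound $\INTO\rho_p\FUNUPB\ge\INTO\MOE\nabla\FUNUPB\MOD^q\ge c>0$. Strong $L^s$ convergence then gives $\INTO\rho\FUNUB>0$. The $q$-lower bound comes from $\INTO\MOE\nabla \FUNUPB\MOD^q\le\INTO\rho_p\FUNUPB\le C\NOE\FUNUPB\NOD_{L^s}^s$ together with the Sobolev inequality, using that $\rho_p$ vanishes where $\FUNUPB<\beta$.

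\textbf{Main obstacle.} The main obstacle is the strong/a.e.\ gradient convergence and the identification of $\CAMZ\cdot\nabla\FUNUB$ in the presence of the nonsmooth $p\to1$ term. I would try the truncated test functions first.
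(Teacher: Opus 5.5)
Your proposal is correct and follows the paper's overall scheme: mountain pass for the $(p,q)$-problems, $p$-uniform bounds in $W^{1,q}_0(\Omega)$, weak limits of $\rho_p$ and of $|\nabla u_p|^{p-2}\nabla u_p$, and identification of $\mathbf{z}\cdot\nabla u$ by testing with $u\varphi$. Three steps are handled differently.

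First, the paper shifts the functional by $\frac{p-1}{p}|\Omega|$ so that it is nondecreasing in $p$, and derives monotonicity and boundedness of $c_{p,\beta}$ from this (Lemmas \ref{lemmabounded1}, \ref{LEM4}, \ref{LEM5}). Your bound along a fixed path $\tau\mapsto\tau e$, with $e$ admissible for all $p$ near $1$, is more elementary and avoids comparing minimax classes for different $p$.

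Second, and most substantively, the paper gets $\nabla u_{p,\beta}\to\nabla u_\beta$ a.e.\ by quoting Boccardo--Murat, whose theorem is formulated for a fixed Leray--Lions operator rather than a $p$-dependent one, and then invoking Kavian. Your argument is self-contained and stronger. Test only $(\mathrm{P}_p)$ with $u_p-u\in W^{1,q}_0(\Omega)$ to get $A_p+B_p=\int_\Omega\rho_p(u_p-u)\to0$, where $A_p=\int_\Omega|\nabla u_p|^p-\int_\Omega|\nabla u_p|^{p-2}\nabla u_p\cdot\nabla u$ and $B_p=\langle-\Delta_q u_p,u_p-u\rangle$. Here $\liminf A_p\ge D:=\int_\Omega|\nabla u|-\int_\Omega\mathbf{z}\cdot\nabla u\ge0$, and $\liminf B_p\ge0$ by monotonicity. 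Hence $B_p\to0$, which gives strong convergence by $(S_+)$. It also forces $D=0$, i.e.\ $\mathbf{z}\cdot\nabla u=|\nabla u|$ a.e.\ since $|\mathbf{z}|\le1$. So your route makes the localized $u\varphi$ test, and the mollification in Lemma \ref{LEM10}, redundant. Two cautions: the limit equation must not be used in this step, since it is not yet available when you need the gradient convergence; and truncations are unnecessary.

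Third, for nontriviality the paper transports $\alpha\le c_{p,\beta}$ to $\int_\Omega\rho_\beta u_\beta$. Your Nehari--Sobolev estimate $\|\nabla u_p\|_q^q\le\int_\Omega\rho_pu_p\le\|u_p\|_s^s\le S^s\|\nabla u_p\|_q^s$ gives $\|\nabla u_p\|_q\ge S^{-s/(s-q)}$ uniformly, hence $u\neq0$ directly from strong convergence. This needs only $u_p\neq0$ and $\rho_pu_p\le u_p^s$. So your claim $\|u_p\|_{L^\infty}>\beta$ can be dropped, and the $L^\infty$ bound of Lemma \ref{LEM7} plays no role.
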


Within the framework of the previous theorem, a natural and important question concerns the asymptotic behavior of the solutions $\FUNUB$ when $\beta\to0^+$. More precisely, it is reasonable to investigate whether the family $(\FUNUB)$ converges, in an appropriate sense, to a limit function as the parameter $\beta$ tends to zero through positive values. In fact, one expects that, when $\beta\to0^+$ the solutions $\FUNUB$ converge to a function $\FUNUZ$ that is a solution of the problem
\begin{align}\label{P0}\tag{P$_0$}
\begin{cases}\begin{array}{rll}
    -\ULAPLA u-\QLAPLA u=&\hspace*{-0.25cm}\MOE u\MOD^{s-2}u &\mbox{ in }\Omega,			\\
    u=&\hspace*{-0.25cm}0 &\mbox{ on }\partial\Omega.
\end{array}\end{cases}
\end{align}

The next theorem is devoted to rigorously establishing this convergence result, showing that the limit process indeed leads to a solution of Problem \eqref{P0} in an appropriate weak sense.
\begin{THEO}\label{THEO2}
    For each $\beta>0$, let $\FUNUB$ be the solution of Problem \eqref{P} obtained in Theorem \ref{THEO1}. There exists a nontrivial and nonnegative solution $\FUNUZ\in\ESPWUQZ\POmega$ of Problem \eqref{P0} such that, as $\beta\to0^+$,
    \begin{align*}
        \FUNUB&\to\FUNUZ\quad\hspace{0.42cm}\mbox{ in }L^r\POmega,\mbox{ for all }1\mI r<q^*,\\
        \FUNUB(x)&\to u(x)\quad\mbox{ a.e. in }\Omega.
    \end{align*}
    Moreover, there exist positive constants $\mu$ and $\beta_0$ such that
    \begin{align*}
        \MOE\{x\in\Omega;\ \FUNUB(x)>\beta\}\MOD\MI\mu
    \end{align*}
    for all $\beta\in(0,\beta_0)$, where $\MOE A\MOD$ denotes the Lebesgue measure of a measurable set $A\subset\CR^N$.
\end{THEO}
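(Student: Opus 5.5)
We will prove Theorem \ref{THEO2} by a compactness argument along $\beta\to0^+$, using the structure of the solutions $\FUNUB$ built in Theorem \ref{THEO1}. These solutions come as limits, as $p\to1^+$, of mountain pass critical points $\FUNUPB$ of the approximating functionals $\FUNJPB$. Hence each $\FUNUB$ carries a field $\CAMZB$ with $\NOE\CAMZB\NODLINF\mI1$ and a function $\RHOB\in L^{\frac{s}{s-1}}\POmega$ as in Definition \ref{DEFINSOL}. It also carries an energy level $c_\beta$, a min-max level of the limit functional $\FUNFB(u)=\int_\Omega|\nabla u|+\frac1q\int_\Omega|\nabla u|^q-\int_\Omega F_\beta(u)$, where $F_\beta(t)=\frac{1}{s}(t^+)^s-\frac{\beta^s}{s}$ for $t\MI\beta$ and $0$ otherwise.

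\textbf{Step 1: uniform bounds.} First, since $F_\beta\mI \frac1s(t^+)^s$, the functionals $\FUNFB$ decrease as $\beta$ decreases. Comparing with a fixed path, we get $c_\beta\mI C$ uniformly for $\beta\in(0,1)$. Next, test the equation with $\FUNUB$, using $\CAMZB\cdot\nabla\FUNUB=|\nabla\FUNUB|$ and $\RHOB\FUNUB\mI\FUNUB^s$. Combining this with the energy identity in the usual Ambrosetti--Rabinowitz fashion (exploiting $s>q>1$ and $sF_\beta(t)\mI \rho t$ on the support) gives
\[
\Bigl(\tfrac1q-\tfrac1s\Bigr)\int_\Omega|\nabla\FUNUB|^q\mI c_\beta\mI C .
\]
Thus $(\FUNUB)$ is bounded in $\ESPWUQZ\POmega$. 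Along a sequence $\beta_n\to0$ we may then assume $\FUNUB\rightharpoonup \FUNUZ$ in $\ESPWUQZ$, strong convergence in $L^r$ for $r<q^*$, and a.e. convergence. Moreover $\CAMZB\overset{*}{\rightharpoonup}\CAMZZ$ in $L^\infty$ and $\RHOB\rightharpoonup\RHOZ$ in $L^{\frac{s}{s-1}}$, with $\RHOZ=\FUNUZ^{s-1}$ a.e. This last identity follows because $|\RHOB-\FUNUB^{s-1}|\mI\beta^{s-1}$ everywhere, given the three cases of Definition \ref{DEFINSOL}.

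\textbf{Step 2: nontriviality and the measure estimate (main obstacle).} We need a lower bound independent of $\beta$. Testing the equation with $\FUNUB$ and using $\RHOB=0$ where $\FUNUB<\beta$ gives
\[
\int_\Omega|\nabla\FUNUB|+\int_\Omega|\nabla\FUNUB|^q\mI\int_{\{\FUNUB\MI\beta\}}\FUNUB^s .
\]
Since the left-hand side dominates $\|\FUNUB\|_{W^{1,q}_0}^q$, Sobolev embedding yields $\|\FUNUB\|\MI\delta>0$ with $\delta$ independent of $\beta$. Then Hölder's inequality on the set $A_\beta=\{\FUNUB\MI\beta\}$ gives
\[
\delta^q\mI C\|\FUNUB\|_{L^{r}}^{s}|A_\beta|^{1-s/r}
\]
for some $s<r<q^*$, with $\|\FUNUB\|_{L^r}$ bounded by Step 1. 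So $|A_\beta|\MI\mu$. To pass from $A_\beta$ to $\{\FUNUB>\beta\}$, note that the level set $\{\FUNUB=\beta\}$ has $\nabla \FUNUB=0$ a.e. There, testing with truncations shows $\RHOB=0$ a.e. (one needs $-\DIVE\CAMZB-\QLAPLA\FUNUB=0$ a.e. on flat sets), so we may replace $A_\beta$ by $\{\FUNUB>\beta\}$. If that step is delicate, we instead work directly with $\int\RHOB\FUNUB$, bounded by $\int_{\{\FUNUB\MI\beta\}}\FUNUB^s$, and handle the flat part separately. Finally, strong $L^s$ convergence together with $\int\FUNUB^s\MI\delta^q-o(1)$ shows $\FUNUZ\neq0$. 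Nonnegativity passes to the limit a.e.

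\textbf{Step 3: the limit equation.} Passing to the limit in the distributional identity requires $\QLAPLA\FUNUB\to\QLAPLA\FUNUZ$, i.e.\ a.e. convergence of the gradients. We obtain this by a Boccardo--Murat type argument, or by the $(S_+)$ property. Test the equation for $\FUNUB$ with $(\FUNUB-\FUNUZ)\varphi$, where $\varphi\in C_c^\infty$. Use the lower semicontinuity of $\int\varphi|Du|$ against $\int\CAMZB\cdot\nabla\FUNUB\,\varphi$, and $\RHOB(\FUNUB-\FUNUZ)\to0$. This gives $\limsup\int\varphi(|\nabla\FUNUB|^{q-2}\nabla\FUNUB-|\nabla\FUNUZ|^{q-2}\nabla\FUNUZ)\cdot\nabla(\FUNUB-\FUNUZ)\mI0$, hence strong convergence in $W^{1,q}_{loc}$. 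Then $-\DIVE\CAMZZ-\QLAPLA\FUNUZ=\FUNUZ^{s-1}$ in $\mathcal D'$. For $\CAMZZ\cdot\nabla\FUNUZ=|\nabla\FUNUZ|$, it suffices to show $\int\varphi|\nabla\FUNUZ|\mI\int\varphi\,\CAMZZ\cdot\nabla\FUNUZ$. We get this from $\int\varphi|\nabla\FUNUB|=\int\varphi\,\CAMZB\cdot\nabla\FUNUB$, the strong $W^{1,1}_{loc}$ convergence, and weak-$*$ convergence of $\CAMZB$. The reverse inequality is automatic from $\|\CAMZZ\|_\infty\mI1$. The convergences hold for the full family after a subsequence. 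The theorem's statement is read up to subsequences, or uniqueness is not claimed.
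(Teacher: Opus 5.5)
\textbf{The gap is in Step 2, where you pass from $\{u_\beta\ge\beta\}$ to $\{u_\beta>\beta\}$.}

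Your main route asserts that $\rho_\beta=0$ a.e.\ on the flat set $\{u_\beta=\beta\}$ ``by testing with truncations.'' This is not justified, and the $1$-Laplacian part is exactly what blocks it. Test with $\psi\,T_\varepsilon(u_\beta)$, where $T_\varepsilon$ is a ramp or tent concentrating at level $\beta$. You then pick up boundary-layer terms such as $\frac{1}{\varepsilon}\int_{\{\beta-\varepsilon<u_\beta<\beta\}}\psi\,(|\nabla u_\beta|+|\nabla u_\beta|^q)\,dx$. These do not tend to zero: by the coarea formula, the first part is an average of weighted perimeters of level sets. So truncations only give one-sided inequalities. Flat zones with $-\mathrm{div}\,\mathbf z\neq 0$ are the typical behaviour of $1$-Laplacian problems, and nothing you invoke rules them out here. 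Your fallback, ``handle the flat part separately,'' is not an argument.

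The missing observation is that you never need $\rho_\beta=0$ there. On $\{u_\beta=\beta\}$ you have $\rho_\beta u_\beta\le\beta^{s-1}\beta=\beta^s$, and $\rho_\beta u_\beta=0$ on $\{u_\beta<\beta\}$. Your own estimates then give
\[
\delta^q\le\int_\Omega\rho_\beta u_\beta\,dx\le\beta^s|\Omega|+\|u_\beta\|_{L^r}^s\,\bigl|\{u_\beta>\beta\}\bigr|^{1-s/r}.
\]
Choosing $\beta_0$ with $\beta_0^s|\Omega|\le\delta^q/2$ yields $|\{u_\beta>\beta\}|\ge\mu>0$ for all $\beta\in(0,\beta_0)$. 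This is the paper's device, via the pointwise bound $0\le\rho_{p,\beta}u_{p,\beta}\le\beta^s+u_{p,\beta}^s\chi_{\{u_{p,\beta}>\beta\}}$. It is also the reason the measure estimate is only claimed for small $\beta$.

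\textbf{With that repair, the rest of your argument is sound and differs from the paper in useful ways.}

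\emph{Lower bound and measure estimate.} The paper takes the lower bound from the mountain-pass constant $\alpha$, carried through $p\to1^+$. It then needs an $L^\infty$ bound $M$, but Lemma~\ref{LEM7} is only uniform in $p$, not in $\beta$. Your Nehari-type bound $\|u_\beta\|\ge\delta$, combined with H\"older for some $s<r<q^*$, uses only the uniform $W^{1,q}_0$ bound.

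\emph{Step 3 is needed, not optional.} The paper writes the limit equation as $-\mathrm{div}\,\mathbf z_0=\rho_0$, dropping $-\Delta_q u_0$. Its proof of $\mathbf z_0\cdot\nabla u_0=|\nabla u_0|$ passes to the limit in $\int_\Omega\varphi\,\mathbf z_\beta\cdot\nabla u_\beta\,dx$ with both factors only weakly convergent. Your $S_+$ argument gives strong convergence of $\nabla u_\beta$, which settles both points.

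\emph{Identifying $\rho_0$.} Your bound $|\rho_\beta-u_\beta^{s-1}|\le\beta^{s-1}$ is a cleaner route to $\rho_0=u_0^{s-1}$.

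\emph{A correction in Step 1.} No min--max characterisation of $u_\beta$ for the limit functional is available. Instead:
\begin{itemize}
\item run the Ambrosetti--Rabinowitz estimate at level $p$, obtaining $(\frac1q-\frac1s)\int_\Omega|\nabla u_{p,\beta}|^q\,dx\le c_{p,\beta}\le C$;
\item get $C$ independent of $p$ and $\beta$ by fixing the mountain-pass endpoint $e$ for $\beta_0$ and using that $\Phi_{p,\beta}$ is nondecreasing in both $p$ and $\beta$;
\item let $p\to1^+$ using weak lower semicontinuity.
\end{itemize}
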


Although the problem involves the $1$-Laplacian operator, whose natural setting is the space $\ESPBV\POmega$, the obtained solution belongs to the Sobolev space $\ESPWUQZ\POmega$. This gain in regularity is a direct consequence of the presence of the $q$-Laplacian term, which exerts a regularizing effect on the solution.

Moreover, the formulation adopted for the notion of solution allows one to rigorously handle the discontinuity of the nonlinearity. In particular, the auxiliary function $\rho$ incorporates the multivalued nature of the Heaviside function at the point of discontinuity, which is essential to give meaning to the equation in a weak sense.\newline

This paper is organized as follows: In Section \ref{S02}, we present the necessary functional framework, including the Space of Functions of Bounded Variation, as well as fundamental tools from the Theory of Divergence-Measure Vector Fields and Nonsmooth Critical Point Theory. In Section \ref{S03}, we establish the existence of solutions to Problem \eqref{P}, proving Theorem \ref{THEO1} through an approximation method combined with variational techniques. Finally, in Section \ref{S04}, we analyze the asymptotic behavior of the solutions as $\beta\to0^+$ and prove Theorem \ref{THEO2}, showing the convergence to a solution of the Problem \eqref{P0}.
\section{Preliminaries}\label{S02}
In this section, we introduce the main functional framework and several auxiliary results that will be used throughout the paper. Since the operator involving the $1$-Laplacian naturally leads to gradients that are measures, the appropriate setting for our analysis is the space of functions of bounded variation. We also recall some basic tools from the theory of divergence-measure vector fields and nonsmooth critical point theory.

Throughout this work, $\Omega\subset\CR^N$, $N\MI2$, denotes a bounded open set with Lipschitz boundary $\partial\Omega$. We denote by $\mathcal{L}^N$ the Lebesgue measure in $\CR^N$ and by $\mathcal{H}^{N-1}$ the $(N-1)$-dimensional Hausdorff measure. The outward unit normal vector on $\partial\Omega$ is denoted by $\nu$.

We will use the usual Lebesgue and Sobolev spaces $\ESPLP\POmega$ and $W^{1,q}_0(\Omega)$ endowed with their standard norms.

First, let us observe that, by the boundedness of $\Omega$ and since $p<q$, it holds that
\begin{align*}
    \ESPWUPZ\POmega\cap\ESPWUQZ\POmega=\ESPWUQZ\POmega.
\end{align*}
Therefore, we consider as functional space the Sobolev space $\ESPWUQZ\POmega$, endowed with the norm
\begin{align*}
    \NOE u\NODWUQZ:=\PAE\INTO\MOE\nabla u\MOD^q\DX\PAD^\frac{1}{q}.
\end{align*}
Finally, we recall that $\ESPWUQZ\POmega$ is continuously embedded in $\ESPLR\POmega$ for all $1\mI r\mI q^*$, and compactly embedded in $\ESPLR\POmega$ for all $1\mI r< q^*$, where $q^*=Nq/(N-q)$.

\section*{Nonsmooth Critical Point Theory}
Since the nonlinear term appearing in the problem fail to be differentiable, we make use of tools from the critical point theory for locally Lipschitz functionals. This framework relies on the generalized gradient in the sense of Clarke \cite{Clarke1983,Clarke1975} and has been further developed in subsequent works by Chang \cite{Chang1981} and Grossinho and Tersian \cite{GrossinhoTersian2001}.

Let $E$ be a real Banach space and let $I:E\to\CR$ be a functional. We say that $I$ is \textbf{locally Lipschitz continuous} when, for all $u\in E$, there exist a neighborhood $V$ of $u$ and a constant $M>0$ such that
\begin{align*}
    \MOE I(v_2)-I(v_1)\MOD\mI\NOE v_2-v_1\NOD
\end{align*}
for all $v_1,v_2\in V$.

The \textbf{generalized directional derivative} of $I$ at $u$ in the direction $v\in E$ is defined by
\begin{align*}
    I^0(u,v):=\limsup\limits_{h\to 0,\ t\to0^+}\dfrac{I(u+h+tv)-I(u+h)}{t}.
\end{align*}
Moreover, $I^0(u,\cdot)$ is continuous, convex and its \textbf{subdifferential} at $z\in E$ is defined by
\begin{align*}
    \partial I^0(u,z):=\CHE\mu\in E^*;\ I^0(u,v)\MI I^0(u,z)+\POE\mu,v-z\POD,\mbox{ for all }v\in E\CHD,
\end{align*}
where $\POE\cdot,\cdot\POD$ is the duality pairing between $E^*$ and $E$.  The \textbf{generalized gradient} of $I$ at $u$ is the set
\begin{align*}
    \partial I(u):=\CHE\mu\in E^*;\ \POE\mu,v\POD\mI I^0(u,v),\mbox{ for all }v\in V\CHD,
\end{align*}
where this set convex, weak$^*$-compact and nonempty.

A point $u\in E$ is called a \textbf{critical point} of $I$ when $0\in\partial I(u)$ and, in this case, the number $c=I(u)$ is called a \textbf{critical value}.

To obtain critical points we use the nonsmooth version of the \textbf{Palais–Smale condition}. We say that $I$ satisfies the $(PS)_c$ condition if every sequence $(u_n)\subset E$ such that
\begin{align*}
    I(u_n)\to c\quad\mbox{ and }\quad\lambda_I(u_n)=\min\CHE\NOE\mu\NOD_{\scriptscriptstyle E^*};\ \mu\in\partial I(u_n)\CHD\to 0
\end{align*}
possesses a strongly convergent subsequence.

The following nonsmooth version of the Mountain Pass Theorem will be used later to obtain the existence of critical points for the functional associated with our problem.
\begin{THEO}{\textbf{(Nonsmooth Mountain Pass)}.} Let $E$ be a Banach Space and $I:E\to\CR$ be locally Lipschitz with $I(0)=0$. Suppose that there exist $\alpha>0$, $r>0$ and $e\in E$ such that
\begin{itemize}
    \item[(a)] $I(u)\MI\alpha$, when $\NOE u\NOD_{\scriptscriptstyle E}=r$.
    \item[(b)] $I(e)<0$ and $\NOE e\NOD_{\scriptscriptstyle E}>r$.
\end{itemize}
Define, for the set $\Gamma:=\CHE\gamma\in C([0,1]; E);\ \gamma(0)=0\mbox{ and }\gamma(1)=e\CHD$, the constant
\begin{align*}
    c=\inf\limits_{\gamma\in\Gamma}\max_{t\in[0,1]} I(\gamma(t)).
\end{align*}
Then $c\MI\alpha$ and there exists a sequence $(u_n)$ in $E$ such that
\begin{align*}
    I(u_n)\to c\quad\mbox{ and }\quad\lambda_I(u_n)\to c.
\end{align*}
If, in addition, $I$ satisfies the Palais–Smale condition, then $c$ is a critical value of $I$.
\end{THEO}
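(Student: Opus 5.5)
The plan is to follow the standard route for the nonsmooth Mountain Pass Theorem (Chang \cite{Chang1981}): apply Ekeland's variational principle on the space of paths, then use a deformation argument built from the generalized gradient. I read the conclusion as $\lambda_I(u_n)\to 0$; the "$\to c$" in the statement appears to be a typo.

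\textbf{Step 1: $c\MI\alpha$.} Let $\gamma\in\Gamma$. Then $t\mapsto\NOE\gamma(t)\NOD_{\scriptscriptstyle E}$ is continuous, equals $0$ at $t=0$, and is larger than $r$ at $t=1$. By the intermediate value theorem there is $t_0$ with $\NOE\gamma(t_0)\NOD_{\scriptscriptstyle E}=r$. Hypothesis (a) then gives $\max I\circ\gamma\MI\alpha$, hence $c\MI\alpha>0$.

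\textbf{Step 2: an almost optimal path.} Equip $\Gamma$ with the sup metric $d$; it is a complete metric space. The functional $\Phi(\gamma)=\max_{t\in[0,1]}I(\gamma(t))$ is continuous on $(\Gamma,d)$ and bounded below by $c$. For $\varepsilon\in(0,\alpha/2)$, Ekeland's principle gives $\gamma_\varepsilon\in\Gamma$ such that
\begin{align*}
\Phi(\gamma_\varepsilon)\mI c+\varepsilon,\qquad \Phi(\gamma)\MI\Phi(\gamma_\varepsilon)-\varepsilon\, d(\gamma,\gamma_\varepsilon)\ \mbox{ for all }\gamma\in\Gamma.
\end{align*}
Let $M_\varepsilon=\{t\in[0,1];\ I(\gamma_\varepsilon(t))=\Phi(\gamma_\varepsilon)\}$. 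This set is compact and avoids $t=0,1$, because $I(0)=0$, $I(e)<0$, and $\Phi(\gamma_\varepsilon)\MI c\MI\alpha>0$. The goal is to find $t\in M_\varepsilon$ with $\lambda_I(\gamma_\varepsilon(t))\mI\varepsilon$. Taking $\varepsilon=1/n$ and $u_n=\gamma_{1/n}(t_n)$ then yields the sequence.

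\textbf{Step 3: deformation (main obstacle).} Suppose instead that $\lambda_I(\gamma_\varepsilon(t))>\varepsilon$ for every $t\in M_\varepsilon$. The needed duality is
\begin{align*}
\min_{\mu\in\partial I(u)}\NOE\mu\NOD_{\scriptscriptstyle E^*}=-\inf_{\NOE v\NOD\mI1}I^0(u,v).
\end{align*}
It follows from Hahn--Banach, since $I^0(u,\cdot)$ is the support function of the weak$^*$-compact convex set $\partial I(u)$, together with a minimax argument. Hence for each $t\in M_\varepsilon$ there is $v_t$ with $\NOE v_t\NOD\mI1$ and $I^0(\gamma_\varepsilon(t),v_t)<-\varepsilon$.

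By upper semicontinuity of $(u,v)\mapsto I^0(u,v)$, the same inequality, with a slightly smaller margin $-\varepsilon'$ where $\varepsilon'>\varepsilon$, persists for $u$ in a neighbourhood of $\gamma_\varepsilon(t)$. Compactness of $\gamma_\varepsilon(M_\varepsilon)$ and a partition of unity on $[0,1]$ glue these choices into a continuous $v:[0,1]\to E$ with $\NOE v(t)\NOD\mI1$. Let $\varphi$ be a cutoff equal to $1$ on $M_\varepsilon$ and vanishing near $0$ and $1$.

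Consider $\gamma_h=\gamma_\varepsilon+h\varphi v\in\Gamma$. Lebourg's mean value theorem gives, for $t$ near $M_\varepsilon$ and small $h$,
\begin{align*}
I(\gamma_h(t))\mI I(\gamma_\varepsilon(t))-\varepsilon' h\varphi(t).
\end{align*}
Away from $M_\varepsilon$, $I\circ\gamma_h$ stays below $\Phi(\gamma_\varepsilon)$ by continuity. Therefore $\Phi(\gamma_h)\mI\Phi(\gamma_\varepsilon)-\varepsilon' h$, while $d(\gamma_h,\gamma_\varepsilon)\mI h$. This contradicts the Ekeland inequality because $\varepsilon'>\varepsilon$. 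The delicate part is keeping uniform control of $I^0$ while $t$ moves away from $M_\varepsilon$.

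\textbf{Step 4: critical value.} If $I$ satisfies $(PS)_c$, a subsequence $u_n\to u$, and continuity gives $I(u)=c$. Choose $\mu_n\in\partial I(u_n)$ with $\NOE\mu_n\NOD\to0$. For every $v$, upper semicontinuity of $I^0$ gives
\begin{align*}
0=\lim\POE\mu_n,v\POD\mI\limsup I^0(u_n,v)\mI I^0(u,v).
\end{align*}
Hence $0\in\partial I(u)$, so $c$ is a critical value of $I$.
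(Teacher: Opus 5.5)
The paper gives no proof of this theorem. It quotes it as a tool from Clarke, Chang and Grossinho--Tersian. The classical proof in Chang's paper goes through a deformation lemma, built from a locally Lipschitz pseudo-gradient vector field for $I$ away from critical points and its flow.

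Your proof is correct and takes a different route: the Ekeland-principle argument carried over to Clarke's calculus. You apply Ekeland on the complete space $(\Gamma,d)$ and then deform only the single almost-optimal path $\gamma_\varepsilon$. The deformation uses the duality $\lambda_I(u)=-\inf_{\|v\|\le1}I^0(u,v)$, upper semicontinuity and sublinearity of $I^0$, and Lebourg's mean value theorem. Step 4 needs only upper semicontinuity of $I^0(\cdot,v)$.

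Your route buys two things. First, the sequence with $I(u_n)\to c$ and $\lambda_I(u_n)\to0$ comes out with no compactness hypothesis, which is exactly the middle conclusion of the statement. You are right that ``$\lambda_I(u_n)\to c$'' must be read as $\to0$: since $c\ge\alpha>0$, the other reading would be useless for the $(PS)_c$ condition as defined. Second, descent directions are needed only near the compact set $\gamma_\varepsilon(M_\varepsilon)$, via a finite partition of unity on $[0,1]$, with no paracompactness argument on $E$ and no ODE flow. Chang's deformation lemma costs more, but it gives a global deformation of sublevel sets that can be reused for other minimax principles.

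One sentence of Step 3 must be made quantitative. ``Away from $M_\varepsilon$, $I\circ\gamma_h$ stays below $\Phi(\gamma_\varepsilon)$'' does not give $\Phi(\gamma_h)\le\Phi(\gamma_\varepsilon)-\varepsilon'h$. Where $0<\varphi<1$ you only have $I(\gamma_h(t))\le\Phi(\gamma_\varepsilon)-\varepsilon'h\varphi(t)$. The fix is as follows.
\begin{itemize}
\item Take $\varphi\equiv1$ on an open neighbourhood $N$ of $M_\varepsilon$.
\item On the compact set $[0,1]\setminus N$ one has $I\circ\gamma_\varepsilon\le\Phi(\gamma_\varepsilon)-\eta$ for some $\eta>0$.
\item With a Lipschitz constant $L$ of $I$ on a neighbourhood of $\gamma_\varepsilon([0,1])$, this gives $I(\gamma_h(t))\le\Phi(\gamma_\varepsilon)-\eta+Lh\le\Phi(\gamma_\varepsilon)-\varepsilon'h$ on that set once $h\le\eta/(L+\varepsilon')$.
\end{itemize}
Similarly, shrink the supports of the partition functions, and require $h$ small, so that each Lebourg segment $[\gamma_\varepsilon(t),\gamma_h(t)]$ lies in all the neighbourhoods where $I^0(\cdot,v_{t_i})<-\varepsilon'$.
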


\section{Existence of Nonnegative Solution}\label{SECEXISSOL}\label{S03}
In this section we are going to prove Theorem \ref{THEO1}. As a first step, for $1 < p < q$, we consider the following auxiliary problem, which involves the $(p,q)$-Laplacian operator
\begin{align}\label{PP}\tag{P$_p$}
\begin{cases}\begin{array}{rll}
    -\PLAPLA u-\QLAPLA u=&\hspace*{-0.25cm} H(u-\beta)\MOE u\MOD^{s-2}u &\mbox{ in }\Omega,			\\
    u=&\hspace*{-0.25cm}0 &\mbox{ on }\partial\Omega,
\end{array}\end{cases}
\end{align}
whose weak formulation is given by the following integral identity
\begin{align}\label{FRACAPP}
    \INTO\MOE\nabla u\MOD^{p-2}\nabla u\cdot\nabla\varphi\DX+\INTO\MOE\nabla u\MOD^{q-2}\nabla u\cdot\nabla\varphi\DX=\INTO\RHOPB\varphi\DX
\end{align}
for all $\varphi\in\ESPWUQZ\POmega$, and it holds that, for almost every $x\in\Omega$,
\begin{align}\label{DEFINRHO}
    \RHOPB(x)\in
    \begin{cases}
        \{0\},                  &\mbox{ if }u(x)<\beta,\\
        [0,\beta^{s-1}],        &\mbox{ if }u(x)=\beta,\\
        \{u(x)^{s-1}\},   &\mbox{ if }u(x)>\beta.
    \end{cases}
\end{align}

Note that a functional $\FUNUPB\in\ESPWUQZ\POmega$ is a weak solution of Problem \eqref{PP} when there exists $\RHOPB\in L^{\frac{s}{s-1}}\POmega$ satisfying expressions \eqref{FRACAPP} and \eqref{DEFINRHO}.\\

Inspired by the arguments of Pimenta, Santos and Santos Júnior \cite{PimentaJunior2024}, who proved the existence of solutions in the case involving only the $1$-Laplacian operator, the critical points of the functional $\FUNJPB:\ESPWUQZ\POmega\to\CR$, given by
\begin{align}\label{FUNJPB}
    \FUNJPB(u):=\dfrac{1}{p}\INTO\MOE\nabla u\MOD^p\DX+\dfrac{1}{q}\INTO\MOE\nabla u\MOD^q-\FUNFB|_{\ESPWUSZ}(u)
\end{align}
in the sense of nonsmooth critical point theory, give rise to solutions of Problem \eqref{PP}, where $\FUNFB:\ESPLS\POmega\to\CR$ is given by
\begin{align*}
    \FUNFB(u)=\INTO F_\beta(u)\DX,
\end{align*}
with $f_\beta(t)=H(t-\beta)\MOE t\MOD^{s-2}t$ and $F_\beta(t_{0})=\displaystyle\int_0^{t_0}f_\beta(t)\DX$. Indeed, the functional $\FUNFB$ is locally Lipschitz and
\begin{align}\label{EXP34}
\partial\FUNFB(u)=\QUE\underline{f}_\beta(u),\overline{f}_\beta(u)\QUD
\end{align}
almost everywhere in $\Omega$, where
\begin{align*}
    \underline{f}_\beta(t)=\lim\limits_{r\to0^+}\mbox{ess}\inf\{f_\beta(s);\ \MOE t-s\MOD<r \}
\end{align*}
and
\begin{align*}
    \overline{f}_\beta(t)=\lim\limits_{r\to0^+}\mbox{ess}\sup\{f_\beta(s);\ \MOE t-s\MOD<r \}.
\end{align*}
The subdifferential of $\FUNFB$ takes the following values when analyzed with respect to the constant $\beta$
\begin{align}\label{EXP35}
    \QUE\underline{f}_\beta(u),\overline{f}_\beta(u)\QUD
    =\begin{cases}
        \{0\},          &\mbox{if }t<\beta,\\
        [0,\beta^{s-1}],&\mbox{if }t=\beta,\\
        \{t^{s-1}\},    &\mbox{if }t>\beta.
    \end{cases}
\end{align}

Due to the presence of the Heaviside function, the energy functional $\FUNJPB$ fails to be Fréchet differentiable. However, it is locally Lipschitz on $\ESPWUQZ\POmega$. Moreover, by \textit{Theorem 2.2} of \cite{Chang1981}, $\partial(\mathcal{F}_H|_{\ESPWUSZ})(u)=\partial\mathcal{F}_\beta(u)$, for all $u\in\ESPWUSZ\POmega$. Hence, by \textit{Proposition 2.1} of \cite{PimentaJunior2024}
\begin{align}\label{EXP37}
    \partial\FUNJPB(u)=\{Q'_p(u) \}-\partial\FUNFB(u)
\end{align}
for all $u\in\ESPWUSZ\POmega$, where $Q_p(u)=\dfrac{1}{p}\IINTO\MOE\nabla u\MOD^p\DX+\dfrac{1}{q}\IINTO\MOE\nabla u\MOD^q\DX$. Therefore, by expressions \eqref{EXP34}, \eqref{EXP35} and \eqref{EXP37}, critical points of $\FUNJPB$, in the sense of the nonsmooth critical point theory, will give rise to solutions of Problem \eqref{PP}.\\

In order to obtain a nontrivial solution of Problem \eqref{P}, we analyze the behavior of the solutions $\FUNUPB$ of Problem \eqref{PP} and show that, as $p\to 1^+$, they converge to a function $\FUNUB\in\ESPWUQZ\POmega$ which solves Problem \eqref{P} in a weak sense. 

In what follows, we will consider $p\in(1,\PB)$ for some $\PB\in(1,q)$ fixed, and the functional $\FUNPB:\ESPWUQZ\POmega\to\CR$ given by
\begin{align*}
    \FUNPB(u):=\FUNJPB(u)+\dfrac{p-1}{p}\MOE\Omega\MOD, 
\end{align*}
where a critical point of $\FUNPB$ implies a critical point of $\FUNJPB$, since these functionals differ only by a constant.

\begin{LEM}
\label{lemmabounded1}
    The functional $\FUNPB$ is nondecreasing with respect to $p$, that is, for all $u\in\ESPWUQZ\POmega$, $\Phi_{p_1,\beta}(u)\leq\Phi_{p_2,\beta}(u)$, where $1<p_1<p_2\mI q$.
\end{LEM}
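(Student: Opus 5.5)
Since the $q$-term $\frac1q\INTO\MOE\nabla u\MOD^q\DX$ and the term $\FUNFB(u)$ do not depend on $p$, the plan is to reduce the statement to a pointwise inequality. We have
\begin{align*}
    \FUNPB(u)=\INTO\PAE\dfrac{\MOE\nabla u\MOD^p}{p}+\dfrac{p-1}{p}\PAD\DX+\dfrac{1}{q}\INTO\MOE\nabla u\MOD^q\DX-\FUNFB(u),
\end{align*}
using $\frac{p-1}{p}\MOE\Omega\MOD=\INTO\frac{p-1}{p}\DX$. So it suffices to show that, for each fixed $t\MI0$, the function
\begin{align*}
    g_t(p):=\dfrac{t^p}{p}+\dfrac{p-1}{p}=1+\dfrac{t^p-1}{p}
\end{align*}
is nondecreasing in $p\in(1,q]$. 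Applying this with $t=\MOE\nabla u(x)\MOD$ and integrating over $\Omega$ then gives $\Phi_{p_1,\beta}(u)\mI\Phi_{p_2,\beta}(u)$. All integrals are finite because $u\in\ESPWUQZ\POmega$ and $p\mI q$ with $\Omega$ bounded.

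The key step is the monotonicity of $g_t$. There are two routes.

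\emph{Young's inequality.} Take $1<p_1<p_2$ and set $\theta=p_1/p_2\in(0,1)$. Young's inequality with exponents $1/\theta$ and $1/(1-\theta)$ gives
\begin{align*}
    t^{p_1}=(t^{p_2})^{\theta}\cdot 1^{1-\theta}\mI\theta t^{p_2}+(1-\theta).
\end{align*}
Dividing by $p_1$ yields
\begin{align*}
    \dfrac{t^{p_1}}{p_1}\mI\dfrac{t^{p_2}}{p_2}+\dfrac{1}{p_1}-\dfrac{1}{p_2},
\end{align*}
which is exactly $g_t(p_1)\mI g_t(p_2)$.

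\emph{Differentiation.} Alternatively, compute
\begin{align*}
    g_t'(p)=\dfrac{p\,t^p\ln t-t^p+1}{p^2}.
\end{align*}
With $y=t^p>0$, the numerator is $y\ln y-y+1$. This function is convex, vanishes at $y=1$, and has derivative $\ln y$, so it is $\MI0$. The case $t=0$ is handled directly: $g_0(p)=(p-1)/p$, which is increasing.

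I expect no real obstacle. The only care needed is handling $t=0$ (that is, the set where $\nabla u=0$) and confirming that the constant $\frac{p-1}{p}\MOE\Omega\MOD$ is exactly what makes the pointwise bound work. I would present the Young's inequality argument, since it avoids the logarithm and covers $t=0$ automatically.
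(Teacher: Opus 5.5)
Your proposal is correct and takes essentially the same approach as the paper. The paper uses the same Young's inequality in integrated form, $\int_\Omega|\nabla u|^{p_1}\,dx\le\frac{p_1}{p_2}\int_\Omega|\nabla u|^{p_2}\,dx+\frac{p_2-p_1}{p_2}|\Omega|$, which is your pointwise bound integrated over $\Omega$, and then adds the $p$-independent terms; your derivative computation is a valid alternative check but not needed.
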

\begin{proof}
    Note that, by Young’s Inequality for $1<p_1\mI p_2$,
    \begin{align*}
        \INTO\MOE\nabla u\MOD^{p_1}\DX\mI\dfrac{p_1}{p_2}\INTO\MOE\nabla u\MOD^{p_2}\DX+\dfrac{p_2-p_1}{p_2}\MOE\Omega\MOD.
    \end{align*}
    Given $u\in\ESPWUQZ\POmega$, we have
    \begin{align*}
        \Phi_{p_1,\beta}(u)
        &=\dfrac{1}{p_1}\INTO\MOE\nabla u\MOD^{p_1}\DX+\dfrac{1}{q}\INTO\MOE\nabla u\MOD^q-\FUNFB|_{\ESPWUSZ}(u)+\dfrac{p_1-1}{p_1}\MOE\Omega\MOD\\
        &\mI\dfrac{1}{p_2}\INTO\MOE\nabla u\MOD^{p_2}\DX+\dfrac{1}{q}\INTO\MOE\nabla u\MOD^q-\FUNFB|_{\ESPWUSZ}(u)+\dfrac{p_2-1}{p_2}\MOE\Omega\MOD\\
        &\mI\Phi_{p_2,\beta}(u),
    \end{align*}
    as we wished to prove.
\end{proof}

\begin{LEM}\label{LEMATPM}
    For each $p\in(1,\PB)$ and $\beta>0$, the functional $\FUNPB$ satisfies the geometric conditions of the Mountain Pass Theorem.
\end{LEM}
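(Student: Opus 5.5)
The plan is to verify the two geometric conditions for $\FUNJPB$, which satisfies $\FUNJPB(0)=0$. Since $\FUNPB=\FUNJPB+\frac{p-1}{p}\MOE\Omega\MOD$ differs only by a constant, the statement for $\FUNPB$ follows by shifting the levels by this constant, that is, by applying the Mountain Pass Theorem to $\FUNPB-\FUNPB(0)=\FUNJPB$. I will also try to make the constants $r$, $\alpha$ and the point $e$ independent of $p\in(1,\PB)$, since that uniformity is what the later passage $p\to1^+$ will need.

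\textbf{Step 1 (condition (a)).} First I record an elementary bound on the primitive. From the definition of $f_\beta$ we have $F_\beta(t)=0$ for $t<\beta$, and $F_\beta(t)=\frac{1}{s}(t^s-\beta^s)$ for $t\MI\beta$. Hence $0\mI F_\beta(t)\mI\frac1s\MOE t\MOD^s$ for all $t\in\CR$. Dropping the nonnegative $p$-term and using the Sobolev embedding $\ESPWUQZ\POmega\hookrightarrow\ESPLS\POmega$, valid since $s<q^*$, I get
\begin{align*}
\FUNJPB(u)\MI\frac1q\NOE u\NODWUQZ^q-\frac{C_s}{s}\NOE u\NODWUQZ^s .
\end{align*}
Because $s>q$, I can choose $r>0$ small so that $\alpha:=\frac{r^q}{q}-\frac{C_s}{s}r^s>0$. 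Then $\FUNJPB(u)\MI\alpha$ whenever $\NOE u\NODWUQZ=r$. Both $r$ and $\alpha$ are independent of $p$ and of $\beta$.

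\textbf{Step 2 (condition (b)).} I fix $\varphi\in\ESPWUQZ\POmega$ with $\varphi\MI0$ and $\varphi\not\equiv0$, and take $t>0$. On the set $\{t\varphi\MI\beta\}$ the formula for $F_\beta$ applies directly. On the set $\{t\varphi<\beta\}$ we have $t^s\varphi^s<\beta^s$, so the integral of $t^s\varphi^s$ there is at most $\beta^s\MOE\Omega\MOD$. Combining the two pieces gives
\begin{align*}
\FUNFB(t\varphi)\MI\frac{t^s}{s}\INTO\varphi^s\DX-\frac{2\beta^s}{s}\MOE\Omega\MOD .
\end{align*}
To get uniformity in $p$, I apply Lemma \ref{lemmabounded1} to write $\FUNPB(t\varphi)\mI\Phi_{\PB,\beta}(t\varphi)$. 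The right-hand side is bounded above by
\begin{align*}
\frac{t^{\PB}}{\PB}\INTO\MOE\nabla\varphi\MOD^{\PB}\DX+\frac{t^q}{q}\INTO\MOE\nabla\varphi\MOD^q\DX-\frac{t^s}{s}\INTO\varphi^s\DX+C(\beta,\Omega).
\end{align*}
Since $s>q>\PB$, this tends to $-\infty$ as $t\to\infty$. I then choose $t_0$ large with $t_0\NOE\varphi\NODWUQZ>r$ and $\Phi_{\PB,\beta}(t_0\varphi)<0$, and set $e=t_0\varphi$. Then $\FUNJPB(e)\mI\FUNPB(e)\mI\Phi_{\PB,\beta}(e)<0$ for every $p\in(1,\PB)$. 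The same $e$ also gives $\FUNPB(e)<\FUNPB(0)$ for the shifted functional, because $\FUNPB(0)>0$.

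\textbf{Main obstacle.} The only delicate point is the normalization. Strictly speaking, $\FUNPB(0)=\frac{p-1}{p}\MOE\Omega\MOD\neq0$, so the geometry has to be read for $\FUNJPB=\FUNPB-\FUNPB(0)$. Under this reading the critical points and Palais–Smale sequences are the same, and the minimax level of $\FUNPB$ is that of $\FUNJPB$ plus $\frac{p-1}{p}\MOE\Omega\MOD$. I expect the substantive issue to be the uniformity in $p$ rather than the estimates themselves, and this is handled by making the choice of $e$ through $\Phi_{\PB,\beta}$ via Lemma \ref{lemmabounded1}.
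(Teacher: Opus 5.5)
Your proposal is correct and follows essentially the same route as the paper. For condition (a) you bound $\Phi_{p,\beta}$ below on a small sphere using $F_\beta(t)\le |t|^s/s$ together with the Sobolev embedding. For condition (b) you take $e=t_0\varphi$ with $\Phi_{\overline{p},\beta}(e)<0$ and transfer this to every $p<\overline{p}$ through the monotonicity in Lemma \ref{lemmabounded1}. The differences are only cosmetic: you allow any nonnegative $\varphi\not\equiv 0$ by absorbing the set $\{t\varphi<\beta\}$ into a $\beta^s|\Omega|$ error term, where the paper requires $|\{\varphi>\beta\}|>0$, and you state the normalization $\Phi_{p,\beta}(0)=\frac{p-1}{p}|\Omega|\neq 0$ explicitly, which the paper handles implicitly by adding this constant to $\alpha$.
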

\begin{proof}
First, we must note that, by the definition of $\FUNFB$,
\begin{align*}
    F_\beta(t)\mI\dfrac{\MOE t\MOD^s}{s},
\end{align*}
for all $t\in\CR$. Consequently,
\begin{align*}
    \FUNPB(u)
    &=\dfrac{1}{p}\INTO\MOE\nabla u\MOD^p+\dfrac{1}{q}\INTO\MOE\nabla u\MOD^q-\INTO F_\beta(u)\DX+\dfrac{p-1}{p}\MOE\Omega\MOD\\
    &\MI\dfrac{1}{p}\INTO\MOE\nabla u\MOD^p+\dfrac{1}{q}\INTO\MOE\nabla u\MOD^q-\dfrac{1}{s}\INTO\MOE u\MOD^s\DX+\dfrac{p-1}{p}\MOE\Omega\MOD\\
    &\MI\dfrac{1}{q}\INTO\MOE\nabla u\MOD^q-\dfrac{1}{s}\INTO\MOE u\MOD^s\DX+\dfrac{p-1}{p}\MOE\Omega\MOD.
\end{align*}
Applying Hölder’s Inequality to $\dfrac{q^*}{s}$ and $\dfrac{q^*}{q^*-s}$, we obtain
\begin{align*}
    \INTO\MOE u\MOD^s\DX\mI\PAE\INTO\MOE u\MOD^{q^*}\DX\PAD^{\frac{s}{q^*}}\MOE\Omega\MOD^{\frac{q^*-s}{q^*}}=\NOE u\NOD_{\scriptscriptstyle L^{q^*}}^s\MOE\Omega\MOD^{\frac{q^*-s}{q^*}}.
\end{align*}
Moreover, using the argument employed in the proof of \textit{Theorem 7.10} in \cite{GilbargTrudinger1977}, we obtain
\begin{align}\label{EQ1}
    \NOE u\NOD_{\scriptscriptstyle L^{q^*}}\mI\dfrac{q(N-1)}{\sqrt{N}(N-1)}\NOE\nabla u\NODLQ,
\end{align}
for all $u\in\ESPWUQZ\POmega$. By combining the two inequalities, we obtain
\begin{align*}
    \FUNPB(u)
    &\MI\dfrac{1}{q}\NOE\nabla u\NODLQ^q-\dfrac{1}{s}\MOE\Omega\MOD^{\frac{q^*-s}{q^*}}\PAE\dfrac{q(N-1)}{\sqrt{N}(N-1)}\PAD^s\NOE\nabla u\NODLQ^s+\dfrac{p-1}{p}\MOE\Omega\MOD\\
    &\MI\dfrac{1}{q}\NOE\nabla u\NODLQ^q-C\NOE\nabla u\NODLQ^s+\dfrac{p-1}{p}\MOE\Omega\MOD\\
    &\MI\NOE\nabla u\NODLQ^s\PAE\dfrac{1}{q}\NOE \nabla u\NODLQ^{q-s}-C\PAD+\dfrac{p-1}{p}\MOE\Omega\MOD,
\end{align*}
where, in the last expression, we denote by $C=\dfrac{1}{s}\PAE\dfrac{q(N-1)}{\sqrt{N}(N-1)}\PAD^s\max\{1,\MOE\Omega\MOD\}$. 
Since $s>q$, we choose $\rho > 0$ sufficiently small so that
\begin{align*}
    \dfrac{1}{q}\rho^{q-s}-C > 0\IFF 0< \rho < \PAE\dfrac{1}{qC}\PAD^\frac{1}{s-q}.
\end{align*}
For such $\rho$ and $\displaystyle \alpha:=\rho\PAE\dfrac{1}{q}\rho^{q-s}-C\PAD + \frac{p-1}{p}|\Omega|$, it follows that $\Phi_{p,\beta}(u) \geq \alpha$, if $\|u\|_{W^{1,q}_0(\Omega)}$.

\noindent To verify the second geometric condition, we must consider $0 < \varphi\in\ESPCINFZ\POmega$ such that $\MOE A_{\varphi,\beta}\MOD>0$, where $A_{\varphi,\beta}:=\{x\in\Omega;\ \varphi(x)>\beta \} $. Note that, for each $t\MI 1$,
\begin{align*}
    \FUNPB(t\varphi)
    =\dfrac{t^p}{p}\INTO\MOE\nabla\varphi\MOD^p\DX+\dfrac{t^q}{q}\INTO\MOE\nabla\varphi\MOD^q\DX-\INTO F_\beta(t\varphi)\DX+\dfrac{p-1}{p}\MOE\Omega\MOD.
\end{align*}
In $A_{\varphi,\beta}$ we have that $t\varphi\MI\varphi>\beta$ and, consequently,
\begin{align*}
    \INTO\FUNFB(t\varphi)\DX\MI\int_{A_{\varphi,\beta}}\dfrac{(t\varphi)^s-\beta^s}{s}\DX=\dfrac{t^s}{s}\int_{A_{\varphi,\beta}}\varphi^s\DX-\dfrac{\beta^s}{s}\MOE A_{\varphi,\beta}\MOD.
\end{align*}
Thus
\begin{align*}
    \FUNPB(t\varphi)
    \mI&\dfrac{t^p}{p}\INTO\MOE\nabla\varphi\MOD^p\DX+\dfrac{t^q}{q}\INTO\MOE\nabla\varphi\MOD^q\DX-\dfrac{t^s}{s}\int_{A_{\varphi,\beta}}\varphi^s\DX\\
    &+\dfrac{\beta^s}{s}\MOE A_{\varphi,\beta}\MOD+\dfrac{p-1}{p}\MOE\Omega\MOD.
\end{align*}
Since $s>q$ and $s>p$, the term $-\frac{t^s}{s}\int_{A_{\varphi,\beta}}\varphi^s\DX$ dominates when $t\to+\infty$ and, consequently,
\begin{align*}
    \FUNPB(t\varphi)\to-\infty
\end{align*}
when $t\to+\infty$. Then, let $e=t\varphi$, such that $\Phi_{\overline{p},\beta}(e) < 0$ and note that, by Lemma \ref{lemmabounded1}, 
\begin{equation}
\label{eqMPT1}
\Phi_{p,\beta}(e) < 0, \quad \mbox{for all $1 < p \leq \overline{p}$.}
\end{equation}
Thus, the functional satisfies the second geometric condition of the Mountain Pass Theorem. 
\end{proof}

As in \cite{PimentaJunior2024}[Lemma 3.2], it is possible to prove the following result.

\begin{LEM}\label{LEMAPS}
    For each $p\in(1,\PB)$ and $\beta>0$, the functional $\FUNPB$ satisfies the nonsmooth Palais-Smale condition
\end{LEM}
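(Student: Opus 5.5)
We follow the standard scheme for Palais–Smale sequences of superlinear nonsmooth functionals, as in \cite{PimentaJunior2024}[Lemma 3.2]. Fix $p\in(1,\PB)$, $\beta>0$, and a sequence $(u_n)\subset\ESPWUQZ\POmega$ with $\FUNPB(u_n)\to c$ and $\lambda_{\FUNPB}(u_n)\to0$. By \eqref{EXP37} and \eqref{EXP34}, for each $n$ there is $\rho_n\in L^{\frac{s}{s-1}}\POmega$ with $\rho_n(x)\in[\underline{f}_\beta(u_n(x)),\overline{f}_\beta(u_n(x))]$ a.e. Moreover, $w_n:=Q_p'(u_n)-\rho_n$ satisfies $\NOE w_n\NOD_{(W^{1,q}_0)^*}=\lambda_{\FUNPB}(u_n)=o(1)$. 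We will first prove that $(u_n)$ is bounded and then that it has a strongly convergent subsequence.

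\emph{Boundedness.} We compute $\FUNPB(u_n)-\frac1s\POE w_n,u_n\POD$. From \eqref{EXP35}, $\rho_n u_n\MI0$ a.e. Also $\rho_n u_n=|u_n|^s$ where $u_n>\beta$, $\rho_n u_n\in[0,\beta^s]$ where $u_n=\beta$, and $\rho_n u_n=0$ elsewhere. On the other hand, $F_\beta(t)=\frac{t^s-\beta^s}{s}$ for $t\MI\beta$ and $F_\beta(t)=0$ otherwise. Hence
\begin{align*}
\frac1s\rho_n u_n-F_\beta(u_n)\MI0 \quad\mbox{a.e. in }\Omega.
\end{align*}
It follows that
\begin{align*}
c+1+o(1)\NOE u_n\NODWUQZ\MI\PAE\frac1p-\frac1s\PAD\INTO|\nabla u_n|^p\DX+\PAE\frac1q-\frac1s\PAD\NOE u_n\NODWUQZ^q .
\end{align*}
Since $s>q>p$, this gives $\NOE u_n\NODWUQZ^q\mI C(1+\NOE u_n\NODWUQZ)$, so $(u_n)$ is bounded.

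\emph{Compactness.} By reflexivity and the compact embedding into $L^s$ (valid since $s<q^*$), a subsequence satisfies $u_n\rightharpoonup u$ weakly in $\ESPWUQZ\POmega$, $u_n\to u$ in $L^s\POmega$, and $u_n\to u$ a.e. Since $|\rho_n|\mI|u_n|^{s-1}$, the sequence $(\rho_n)$ is bounded in $L^{\frac{s}{s-1}}\POmega$, so $\INTO\rho_n(u_n-u)\DX\to0$ by Hölder. Testing with $u_n-u$ then yields
\begin{align*}
\POE Q_p'(u_n)-Q_p'(u),u_n-u\POD=\POE w_n,u_n-u\POD+\INTO\rho_n(u_n-u)\DX-\POE Q_p'(u),u_n-u\POD\to0 .
\end{align*}
Both the $p$- and $q$-parts of $\POE Q_p'(u_n)-Q_p'(u),u_n-u\POD$ are nonnegative by monotonicity, so the $q$-part alone tends to zero. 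For the $q$-Laplacian we then conclude in one of two ways. For $q\MI2$, Simon's inequalities give $\NOE\nabla u_n-\nabla u\NODLQ^q\mI C\,\POE(-\QLAPLA) u_n-(-\QLAPLA) u,u_n-u\POD$ directly. For $q<2$, the same inequalities combined with Hölder and boundedness give the analogous estimate. Alternatively, one can invoke the $(S_+)$ property of $-\QLAPLA$. Either way, $u_n\to u$ strongly in $\ESPWUQZ\POmega$.

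The only genuinely delicate point is the pointwise inequality $\frac1s\rho_nu_n\MI F_\beta(u_n)$. It holds because the jump of the Heaviside function at $\beta$ contributes $\rho_nu_n\mI\beta^s$ on $\{u_n=\beta\}$, where $F_\beta=0$. This works only since we pair with $\frac1s$: the resulting term $\frac1s\rho_nu_n-F_\beta(u_n)$ is then $\MI0$, which is the favorable sign for the lower bound. Everything else is routine.
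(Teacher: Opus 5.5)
Your proof is correct and follows the standard scheme the paper defers to (Lemma 3.2 of \cite{PimentaJunior2024}). You get boundedness from $\FUNPB(u_n)-\frac1s\POE w_n,u_n\POD$ using exactly the pointwise inequality the paper uses in \eqref{eqbounded31} for Lemma \ref{LEM6}, and then strong convergence from the $(S_+)$ property of $-\QLAPLA$ after discarding the nonnegative $p$-part. One small correction to your closing remark: on $\{u_n=\beta\}$ the needed sign comes from $\rho_n\MI 0$ (not from $\rho_nu_n\mI\beta^s$), and any exponent $\theta\in(q,s]$ in place of $s$ would work equally well, so the pairing with $\frac1s$ is not essential.
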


As a consequence of lemmas \ref{LEMATPM} and \ref{LEMAPS}, we obtain, for each $p\in(1,\PB)$, a critical point $\FUNUPB\in\ESPWUPZ\POmega$ at level
\begin{align*}
    c_{p,\beta}=\inf\limits_{\gamma\in\Gamma}\max\limits_{t\in[0,1]}\FUNPB(\gamma(t))
\end{align*}
with $\Gamma_p=\{\gamma\in C([0,1],\ESPWUPZ\POmega);\ \gamma(0)=0\mbox{ and }\gamma(1)=e \}$, that is,
\begin{align}\label{FUNUPB}
    0\in\partial\FUNPB(\FUNUPB)
    \quad\mbox{ and }\quad
    \FUNPB(\FUNUPB)=c_{p,\beta}.
\end{align}

Moreover, there exists $\RHOPB\in L^{\frac{s}{s-1}}\POmega$ such that $\FUNUPB$ and $\RHOPB$ satisfy \eqref{FRACAPP} and \eqref{DEFINRHO}. Finally, by using $\varphi={u}_{p,\beta}^-$ as a test function in Expression \eqref{FRACAPP} and employing Expression \eqref{DEFINRHO}, we obtain $\NOE{u}_{p,\beta}^-\NODWUPZ^p=0$, which implies $\FUNUPB\MI 0$ almost everywhere in $\Omega$.

\begin{LEM}\label{LEM4}
    For each $p\in(1,\PB)$, the sequence $(\FUNPB(\FUNUPB))$ is non-decreasing.
\end{LEM}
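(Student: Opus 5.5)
The statement says that $p\mapsto c_{p,\beta}=\FUNPB(\FUNUPB)$ is nondecreasing on $(1,\PB)$: if $1<p_1<p_2<\PB$, then $c_{p_1,\beta}\mI c_{p_2,\beta}$. I will derive this from the minimax characterization of $c_{p,\beta}$ combined with the pointwise monotonicity of Lemma \ref{lemmabounded1}.

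The first step is to check that all the minimax problems for $p\in(1,\PB)$ use the same class of paths. The endpoint $e=t\varphi$ was fixed once in Lemma \ref{LEMATPM}, with $\Phi_{\PB,\beta}(e)<0$, so by \eqref{eqMPT1} it satisfies $\FUNPB(e)<0$ for every $p\in(1,\PB]$. The paths are continuous maps into $\ESPWUQZ\POmega$, since this is the space on which $\FUNPB$ is defined and where the mountain pass theorem is applied; the notation $\ESPWUPZ$ in the definition of $\Gamma_p$ should be read this way. Hence $\Gamma_p=\Gamma$ is the same set for all such $p$.

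Now fix $p_1<p_2$ and any $\gamma\in\Gamma$. For each $t\in[0,1]$, Lemma \ref{lemmabounded1} gives $\Phi_{p_1,\beta}(\gamma(t))\mI\Phi_{p_2,\beta}(\gamma(t))$. Taking the maximum over $t\in[0,1]$ gives
\begin{align*}
    \max_{t\in[0,1]}\Phi_{p_1,\beta}(\gamma(t))\mI\max_{t\in[0,1]}\Phi_{p_2,\beta}(\gamma(t)).
\end{align*}
The left-hand side is at least $c_{p_1,\beta}$. Taking the infimum over $\gamma\in\Gamma$ on the right then yields $c_{p_1,\beta}\mI c_{p_2,\beta}$. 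Since $\FUNPB(\FUNUPB)=c_{p,\beta}$ by \eqref{FUNUPB}, this is exactly the claim.

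The only step that needs care is the first one: the monotonicity argument works only if $\Gamma$ does not depend on $p$, in particular only if the endpoint $e$ is common to all $p$. This is exactly why $e$ was chosen using $\Phi_{\PB,\beta}$ in Lemma \ref{LEMATPM}. A consequence I will record is the uniform bound $c_{p,\beta}\mI\max_{t\in[0,1]}\Phi_{\PB,\beta}(te)$ for all $p\in(1,\PB)$, which will be used later when passing to the limit $p\to1^+$.
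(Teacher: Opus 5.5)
Your proof is correct and follows the paper's argument. Both combine the minimax characterization $c_{p,\beta}=\FUNPB(\FUNUPB)$ with the pointwise monotonicity of Lemma \ref{lemmabounded1}, using the common endpoint $e$ from \eqref{eqMPT1}. The only difference is how the path classes are compared: the paper uses the inclusion $\Gamma_{p_2}\subset\Gamma_{p_1}$ (paths in $W^{1,p_2}_0$ are paths in $W^{1,p_1}_0$), whereas you note that $\FUNPB$ is defined on $\ESPWUQZ\POmega$, so the class is the same for all $p$. That is the cleaner reading and gives the same chain of inequalities.
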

\begin{proof}
    Let $1<p_1<p_2<\PB$. First, note that $\Gamma_{p_2}\subset\Gamma_{p_1}$, since, as \break $W^{1,{p_2}}_0\POmega\subset W^{1,{p_1}}_0\POmega$, the paths $\mathcal{C}\PAE[0,1],W^{1,{p_2}}_0\POmega\PAD$ also belong to $\mathcal{C}\PAE[0,1],W^{1,{p_1}}_0\POmega\PAD$. Hence, by Lemma \ref{lemmabounded1},
    \begin{align*}
        \Phi_{p_1,\beta}(u_{p_1,\beta})
        &=\inf\limits_{\gamma\in\Gamma_{p_1}}\max\limits_{t\in[0,1]}\Phi_{p_1,\beta}(\gamma(t))\\
        &\leq\inf\limits_{\gamma\in\Gamma_{p_2}}\max\limits_{t\in[0,1]}\Phi_{p_1,\beta}(\gamma(t))\\
        &\mI\inf\limits_{\gamma\in\Gamma_{p_2}}\max\limits_{t\in[0,1]}\Phi_{p_2,\beta}(\gamma(t))\\
        &\mI\Phi_{p_2,\beta}(u_{p_2,\beta})
    \end{align*}
    Therefore, the sequence $(\FUNPB(\FUNUPB))$ is non-decreasing.
\end{proof}

\begin{LEM}\label{LEM5}
    The sequence $(\FUNPB(\FUNUPB))_{p\in(1,\PB)}$ is bounded.
\end{LEM}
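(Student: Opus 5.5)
Since Lemma \ref{LEM4} shows that $p\mapsto\FUNPB(\FUNUPB)$ is nondecreasing on $(1,\PB)$, it suffices to bound it from above by its value near $\PB$, and from below by a constant independent of $p$.

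For the upper bound, fix $p_0\in(1,\PB)$ close to $\PB$. By Lemma \ref{LEM4}, $\FUNPB(\FUNUPB)\mI\Phi_{p_0,\beta}(u_{p_0,\beta})=c_{p_0,\beta}$ for all $p\in(1,p_0]$. For $p\in(p_0,\PB)$ we bound directly. Use the straight path $\gamma(t)=te$, with $e=t_0\varphi$ the element from Lemma \ref{LEMATPM}; since $\varphi\in\ESPCINFZ\POmega$, this path lies in every $\Gamma_p$. Then
\begin{align*}
    c_{p,\beta}\mI\max_{t\in[0,1]}\FUNPB(te)\mI\max_{t\in[0,1]}\Phi_{\PB,\beta}(te)=:M,
\end{align*}
where the second inequality is Lemma \ref{lemmabounded1}. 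The constant $M$ is finite because $t\mapsto\Phi_{\PB,\beta}(te)$ is continuous on $[0,1]$. It is also independent of $p$, since $e$ was fixed once through \eqref{eqMPT1}. This single estimate already gives the upper bound for every $p\in(1,\PB)$, so the appeal to monotonicity is not even needed.

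For the lower bound, Lemma \ref{LEMATPM} gives $c_{p,\beta}\MI\alpha$. Here
\begin{align*}
    \alpha=\rho\PAE\tfrac1q\rho^{q-s}-C\PAD+\tfrac{p-1}{p}\MOE\Omega\MOD\MI\rho\PAE\tfrac1q\rho^{q-s}-C\PAD>0,
\end{align*}
and the constants $\rho$ and $C$ depend only on $q$, $s$, $N$ and $\Omega$, not on $p$. The mountain pass estimate $c_{p,\beta}\MI\alpha$ applies because every path in $\Gamma_p$ starts at $0$ and ends at $e$ with $\NOE e\NODWUQZ>\rho$, so it must cross the sphere $\NOE u\NODWUQZ=\rho$. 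Hence
\begin{align*}
    0<\rho\PAE\tfrac1q\rho^{q-s}-C\PAD\mI\FUNPB(\FUNUPB)\mI M,
\end{align*}
which is the claim.

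The main obstacle is making sure the mountain-pass geometry is uniform in $p$, namely that the same $e$ and the same radius work for all $p$. For $e$ this is exactly what \eqref{eqMPT1} provides. The lower estimate discards the $\frac1p\INTO\MOE\nabla u\MOD^p$ term, so its constant does not depend on $p$. A second subtlety is the mismatch between the path classes: $\Gamma_p$ is defined in $\ESPWUPZ\POmega$, while the functional lives on $\ESPWUQZ\POmega$. The upper bound avoids this issue because the path $te$ is smooth and belongs to every class. For the lower bound, I would interpret the paths in $\ESPWUQZ\POmega$, consistent with the domain of $\FUNPB$.
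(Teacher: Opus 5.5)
Your proof is correct, but it takes a different route from the paper's.

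\textbf{The paper's route.} The paper's proof is a single line. It invokes the monotonicity of the minimax levels (Lemma \ref{LEM4}) to get $\Phi_{p,\beta}(u_{p,\beta})\le c_{p_0,\beta}$ for $p\in(1,p_0)$. It leaves both the range $p\in[p_0,\overline{p})$ and the lower bound implicit.

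\textbf{Your route.} You compare along the single fixed path $t\mapsto te$. This uses only the pointwise monotonicity of Lemma \ref{lemmabounded1} and the $p$-independence of $e$ from \eqref{eqMPT1}. It gives the upper bound $M$ on the whole interval $(1,\overline{p})$ at once. It also sidesteps Lemma \ref{LEM4}, whose proof relies on the inclusion $\Gamma_{p_2}\subset\Gamma_{p_1}$ of path classes in $W^{1,p}_0(\Omega)$. That inclusion is awkward, since $\Phi_{p,\beta}$ is defined only on $W^{1,q}_0(\Omega)$. If all paths are taken in $W^{1,q}_0(\Omega)$, as you propose, the path class no longer depends on $p$, and Lemma \ref{LEM4} reduces to Lemma \ref{lemmabounded1}.

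\textbf{What each approach buys.} The paper's route gives extra structure: $p\mapsto c_{p,\beta}$ is monotone, so $\lim_{p\to1^+}c_{p,\beta}$ exists. For boundedness alone, your fixed-path estimate is cleaner. It is essentially what the paper itself uses in Lemma \ref{LEM6} when it writes $c_{p,\beta}\le c_{\overline{p},\beta}$.

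\textbf{The lower bound.} Your lower bound via the uniform mountain pass geometry is correct, and the paper never spells it out. Two small points:
\begin{itemize}
\item On the sphere $\|u\|_{W^{1,q}_0}=\rho$, the estimate actually gives $\rho^{s}\bigl(\tfrac1q\rho^{q-s}-C\bigr)$, not $\rho\bigl(\tfrac1q\rho^{q-s}-C\bigr)$. This typo is inherited from the paper and does not affect positivity.
\item The claim $\|e\|_{W^{1,q}_0}>\rho$ needs a line of justification. The same estimate shows $\Phi_{p,\beta}\ge0$ on the closed ball of radius $\rho$, while $\Phi_{p,\beta}(e)<0$.
\end{itemize}
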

\begin{proof}
    By the monotonicity of the functional $\FUNPB$, it follows that, for a fixed $p_0\in(1,\PB)$,
    \begin{align*}
        \FUNPB(\FUNUPB)\mI\Phi_{p_0,\beta}(u_{p_0,\beta})=c_\beta
    \end{align*}
    for all $p\in(1,p_0)$, and consequently, the sequence $(\FUNPB(\FUNUPB))_{p\in(1,\PB)}$ is bounded. 
    
\end{proof}

\begin{LEM}\label{LEM6}
    The family $(\FUNUPB)_{p\in(1,\PB)}$ is bounded in $\ESPWUQZ\POmega$.
\end{LEM}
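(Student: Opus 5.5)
We combine the energy bound of Lemma \ref{LEM5} with the equation tested against $\FUNUPB$ itself, an Ambrosetti--Rabinowitz type argument. Write $u=\FUNUPB$ and $\rho=\RHOPB$. We use two facts.

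First, by Lemma \ref{LEM5} there is $c_\beta$, independent of $p\in(1,p_0)$, with
\begin{align*}
    \dfrac{1}{p}\INTO\MOE\nabla u\MOD^p\DX+\dfrac{1}{q}\INTO\MOE\nabla u\MOD^q\DX-\INTO F_\beta(u)\DX+\dfrac{p-1}{p}\MOE\Omega\MOD\mI c_\beta .
\end{align*}
For the finitely many (or remaining) $p\in[p_0,\PB)$ one uses $c_{p,\beta}\mI c_{\PB,\beta}$ in the same way; alternatively, Lemma \ref{LEM4} and monotonicity already give a uniform bound on the whole range.

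Second, taking $\varphi=u$ in \eqref{FRACAPP} gives
\begin{align*}
    \INTO\MOE\nabla u\MOD^p\DX+\INTO\MOE\nabla u\MOD^q\DX=\INTO\rho\, u\DX .
\end{align*}

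Next we compare $\rho u$ with $F_\beta(u)$ pointwise, using $u\MI0$ and \eqref{DEFINRHO}. Where $u<\beta$, both $\rho u$ and $F_\beta(u)$ vanish. Where $u\MI\beta$, we have $F_\beta(u)=(u^s-\beta^s)/s\mI u^s/s$ and $\rho u\MI u^s\cdot\mathbf{1}_{\{u>\beta\}}$. On $\{u=\beta\}$, $F_\beta(u)=0\mI\rho u/s$. Hence $F_\beta(u)\mI \frac{1}{s}\rho u$ a.e. Therefore
\begin{align*}
    c_\beta\MI \PAE\dfrac1p-\dfrac1s\PAD\INTO\MOE\nabla u\MOD^p\DX+\PAE\dfrac1q-\dfrac1s\PAD\INTO\MOE\nabla u\MOD^q\DX\MI\PAE\dfrac1q-\dfrac1s\PAD\NOE u\NODWUQZ^q .
\end{align*}
The additive term $\frac{p-1}{p}\MOE\Omega\MOD$ is nonnegative and is dropped. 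The coefficient $\frac1p-\frac1s$ is positive because $p<q<s$.

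Since $s>q$ is fixed, this yields $\NOE u_{p,\beta}\NODWUQZ^q\mI \frac{qs}{s-q}c_\beta$ uniformly in $p$, which is the claim.

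The only delicate point is the pointwise inequality $F_\beta(u)\mI\frac1s\rho u$, because of the multivalued $\rho$ on $\{u=\beta\}$ and the shift $\beta^s$ in $F_\beta$. As checked above, the shift only helps. Nonnegativity of $u_{p,\beta}$, shown before Lemma \ref{LEM4}, is used to write $F_\beta$ explicitly.
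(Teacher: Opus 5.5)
Your proof is correct and is essentially the paper's argument. Both bound $c_{p,\beta}$ uniformly using monotonicity in $p$, test \eqref{FRACAPP} with $u_{p,\beta}$, and use the pointwise inequality $F_\beta(u_{p,\beta})\le \frac1s\rho_{p,\beta}u_{p,\beta}$ (the paper's \eqref{eqbounded31}) to obtain $\bigl(\frac1q-\frac1s\bigr)\|u_{p,\beta}\|_{W^{1,q}_0}^q\le c_{\overline{p},\beta}$.

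The only blemish is your remark about ``finitely many'' $p\in[p_0,\overline{p})$, which is inaccurate and unnecessary. The single bound $c_{p,\beta}\le c_{\overline{p},\beta}$, or equivalently $c_{p,\beta}\le\max_{t\in[0,1]}\Phi_{\overline{p},\beta}(te)$ via Lemma \ref{lemmabounded1}, already covers the whole range $p\in(1,\overline{p})$.
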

\begin{proof}
Note that, by Young’s Inequality and Lemma \ref{LEM5},
\begin{equation}
\label{eqbounded2}     c_{p,\beta}\mI c_{\PB,\beta}
\end{equation}
for all $p\in(1,\PB)$. Since $0\in\partial\FUNPB(\FUNUPB)$, there exists $\RHOPB$ such that
\begin{align*}
    Q^\prime_p(\FUNUPB)-\RHOPB=0.
\end{align*}
Taking the dual product of the above expression with $\FUNUPB$, we obtain
\begin{align}\label{EQ01LEM6}
    \INTO\MOE\nabla\FUNUPB\MOD^p\DX+\INTO\MOE\nabla\FUNUPB\MOD^q\DX=\INTO\RHOPB\FUNUPB\DX.
\end{align}
Note that, from \eqref{DEFINRHO} and the definition of $F_\beta$,
\begin{align}
\label{eqbounded31}
\INTO\PAE\dfrac{1}{s}\RHOPB\FUNUPB-F_\beta(\FUNUPB)\PAD\DX = \frac{\beta}{s}\int_{\{u_{p,\beta}=\beta\}} \rho_{p,\beta} \DX + \frac{\beta^s}{s}|\{u_{p,\beta} > \beta\}| \geq 0. 
\end{align}
Then
\begin{align*}
    c_{p,\beta}
    &=\FUNPB(\FUNUPB)-\dfrac{1}{s}\POE Q^\prime(\FUNUPB)-\RHOPB,\FUNUPB\POD\\
    &=\PAE\dfrac{1}{p}-\dfrac{1}{s}\PAD\INTO\MOE\nabla\FUNUPB\MOD^p\DX + \PAE\dfrac{1}{q}-\dfrac{1}{s}\PAD\INTO\MOE\nabla\FUNUPB\MOD^q\DX\\
    &\hspace{0.5cm}  + \INTO\PAE\dfrac{1}{s}\RHOPB\FUNUPB-F_\beta(\FUNUPB)\PAD\DX\\
    & \geq \PAE\dfrac{1}{p}-\dfrac{1}{s}\PAD\INTO\MOE\nabla\FUNUPB\MOD^p\DX + \PAE\dfrac{1}{q}-\dfrac{1}{s}\PAD\INTO\MOE\nabla\FUNUPB\MOD^q\DX.
\end{align*}
Hence, from Lemma \ref{LEM5}, there exists a constant $C=C(\beta) > 0$, such that
\begin{equation}
\label{eqbounded3}
\int_\Omega |\nabla u_{p,\beta}|^p \DX + \int_\Omega |\nabla u_{p,\beta}|^q \DX \leq C, \quad \mbox{for $1 < p < \overline{p}$}.
\end{equation}
from where it follows the result.
\end{proof}

The lemma above ensures that the family $(\FUNUPB)_{p\in(1,\PB)}$ is uniformly bounded in the space $W^{1,q}_0(\Omega)$, which allows passing to the limit as $p\to 1^+$. However, such an estimate does not provide pointwise control of the solutions. In order to obtain uniform bounds in $\ESPLINF\POmega$, which are essential for handling the nonlinear term and for subsequent analyses, the next result establishes, via the Moser Iteration technique, a uniform estimate in $\ESPLINF\POmega$, uniform on $p$. Its proof can be done by following the same lines as in \cite{PimentaJunior2024}[Lemma 3.4].

\begin{LEM}\label{LEM7}
    For each $\beta>0$, there exists $\CINF>0$ independent of $p\in(1,\PB)$ such that
    \begin{align}
        \NOE\FUNUPB\NODLINF\mI\CINF.
    \end{align}
\end{LEM}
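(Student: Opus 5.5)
We will prove the uniform $L^\infty$ bound by Moser iteration applied to the weak formulation \eqref{FRACAPP}, relying on the $q$-Laplacian term only. Since $\FUNUPB\MI 0$ and $\RHOPB$ satisfies \eqref{DEFINRHO}, we have $0\mI\RHOPB\mI \FUNUPB^{s-1}$ a.e. in $\Omega$. For $L>0$ and $\gamma\MI 1$ we set $u_L=\min\{\FUNUPB,L\}$ and test \eqref{FRACAPP} with $\varphi=\FUNUPB\, u_L^{q(\gamma-1)}\in\ESPWUQZ\POmega$; this is admissible because $u_L$ is bounded. The contribution of the $p$-Laplacian is nonnegative, since
\begin{align*}
\MOE\nabla \FUNUPB\MOD^{p-2}\nabla\FUNUPB\cdot\nabla\varphi\MI 0 \quad\mbox{a.e. in }\Omega.
\end{align*}
We therefore drop it, and no constant depending on $p$ appears. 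From the $q$-term we keep $\INTO\MOE\nabla\FUNUPB\MOD^q u_L^{q(\gamma-1)}\DX$. This controls $\gamma^{-q}\INTO\MOE\nabla(\FUNUPB u_L^{\gamma-1})\MOD^q\DX$ up to a constant depending only on $q$, by the standard computation. The Sobolev inequality \eqref{EQ1} then gives
\begin{align*}
\NOE \FUNUPB u_L^{\gamma-1}\NOD_{L^{q^*}}^q\mI C\gamma^q\INTO \FUNUPB^{s}\, u_L^{q(\gamma-1)}\DX,
\end{align*}
where we used $\RHOPB\varphi\mI \FUNUPB^{s}u_L^{q(\gamma-1)}$.

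Next we handle the superlinear factor. We write $\FUNUPB^{s}u_L^{q(\gamma-1)}=\FUNUPB^{s-q}\,(\FUNUPB u_L^{\gamma-1})^q$ and apply Hölder with exponents $\frac{q^*}{s-q}$ and its conjugate. The weight satisfies $\NOE\FUNUPB^{s-q}\NOD_{L^{q^*/(s-q)}}=\NOE\FUNUPB\NOD_{L^{q^*}}^{s-q}$, and this is bounded uniformly in $p$ by Lemma \ref{LEM6} together with \eqref{EQ1}. This is the point where uniformity in $p$ enters, through the estimate \eqref{eqbounded3} with $C=C(\beta)$. We obtain
\begin{align*}
\NOE \FUNUPB u_L^{\gamma-1}\NOD_{L^{q^*}}^q\mI C\gamma^q\NOE \FUNUPB u_L^{\gamma-1}\NOD_{L^{\sigma}}^q, \quad \sigma=\frac{qq^*}{q^*-s+q}<q^*.
\end{align*}
Letting $L\to\infty$ by monotone convergence yields $\NOE\FUNUPB\NOD_{L^{\gamma q^*}}\mI (C\gamma^q)^{1/(q\gamma)}\NOE\FUNUPB\NOD_{L^{\gamma\sigma}}$. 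This limit step requires the right-hand side to be finite, which we check at each step of the iteration.

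Finally, we iterate with $\chi=q^*/\sigma>1$ and $\gamma_k=\chi^k$, starting from $\gamma_0=1$, where $\NOE\FUNUPB\NOD_{L^\sigma}$ is uniformly bounded by Lemma \ref{LEM6}. The product $\prod_k (C\chi^{kq})^{1/(q\chi^k)}$ converges, since $\sum k\chi^{-k}<\infty$. Hence $\NOE\FUNUPB\NODLINF\mI \CINF$, with $\CINF$ depending only on $N,q,s,|\Omega|$ and the constant in \eqref{eqbounded3}, and not on $p$.

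The main obstacle is making the constants genuinely $p$-independent. We expect this to be unproblematic, since we discard the $p$-term by its sign and use only the $q$-structure and the bound \eqref{eqbounded3}. The remaining care lies in justifying the truncated test functions, which are in $\ESPWUQZ\POmega$ because $u_L\in L^\infty$, and in the finiteness needed at each passage $L\to\infty$. An alternative would be to obtain a preliminary $L^{r}$ bound for all $r<\infty$ by a Brezis–Kato type argument before iterating.
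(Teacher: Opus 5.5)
Your proof is correct and follows the route the paper indicates when it defers to \cite[Lemma 3.4]{PimentaJunior2024}: a Moser iteration with test functions $\FUNUPB u_L^{q(\gamma-1)}$, using the pointwise bound $0\mI\RHOPB\mI\FUNUPB^{s-1}$ and the uniform estimate \eqref{eqbounded3}. Dropping the nonnegative $p$-Laplacian term and treating $\FUNUPB^{s-q}$ as a potential in $L^{q^*/(s-q)}$ (an exponent larger than $N/q$ precisely because $s<q^*$) is what makes every constant independent of $p$.
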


As a consequence of Lemma \ref{LEM6}, the family $(\FUNUPB)_{p\in(1,\PB)}$ is uniformly bounded in $\ESPWUQZ\POmega$. Since the embedding $\ESPWUQZ\POmega\hookrightarrow L^r\POmega$ is compact for every $r\in[q,q^*)$, there exists a function $\FUNUB\in\ESPWUQZ\POmega$ such that, as $p\to1^+$,
\begin{align}
    \label{CONVER1} \FUNUPB&\rightharpoonup\FUNUB\quad\mbox{ in }\ESPWUQZ\POmega,\\
    \label{E326} \FUNUPB&\to\FUNUB\quad\mbox{ in }L^r\POmega,\mbox{ with }1 \leq r < q^*,\\
    \label{E327} \FUNUPB&\to\FUNUB\quad\mbox{ a.e. in } \Omega.
\end{align}

Moreover, the limit function satisfies $\FUNUB\in\ESPLINF\POmega$ and $\FUNUB(x)\MI 0$ for almost every $x\in\Omega$.\\

The next step in the proof of Theorem \ref{THEO1} is to show that the limit function $\FUNUB$, obtained by passing to the limit as $p\to 1^+$, is indeed a solution of Problem \eqref{P} in the sense of Definition \ref{DEFINSOL}. To achieve this, we must carefully analyze the behavior of the approximating solutions $\FUNUPB$ and of the terms associated with the discontinuous nonlinearity in the limit $p \to 1^+$. In particular, we need to identify the appropriate limit of the sequence $\RHOPB$.

\begin{LEM}\label{LEM8}
    Assume that $\FUNUPB\in\ESPWUQZ\POmega$, $\RHOPB\in L^{\frac{s}{s-1}}\POmega$, and $\FUNUB\in\ESPBV\POmega$ satisfy \eqref{FRACAPP}, \eqref{DEFINRHO}, and \eqref{E326}, respectively. There exists $\RHOB\in L^{\frac{s}{s-1}}\POmega$ such that
    \begin{align}\label{E328}
        \RHOPB\rightharpoonup\RHOB\quad\mbox{ in }L^{\frac{s}{s-1}}\POmega,
    \end{align}
    as $p\to1^+$. Moreover, $\RHOB$ satisfies, for almost every $x\in\Omega$,
    \begin{align}\label{E329}
    \RHOB(x)\in
    \begin{cases}
        \{0\},                  &\mbox{ if }\FUNUB(x)<\beta,\\
        [0,\beta^{s-1}],        &\mbox{ if }\FUNUB(x)=\beta,\\
        \{\FUNUB(x)^{s-1}\},   &\mbox{ if }\FUNUB(x)>\beta.
    \end{cases}
\end{align}
\end{LEM}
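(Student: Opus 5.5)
The argument has two parts. First, uniform bounds on $\RHOPB$ give a weakly convergent subsequence. Second, pointwise information on $\FUNUB$, together with weak closedness of convex constraint sets, identifies the limit $\RHOB$.

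\textbf{Step 1 (bounds and weak limit).} By \eqref{DEFINRHO}, $0\mI\RHOPB\mI \max\{\beta^{s-1},\FUNUPB^{s-1}\}\mI \FUNUPB^{s-1}+\beta^{s-1}$ a.e. in $\Omega$. Lemma \ref{LEM7} gives $\NOE\FUNUPB\NODLINF\mI\CINF$ uniformly in $p$. Hence $0\mI\RHOPB\mI \CINF^{s-1}+\beta^{s-1}$, so $(\RHOPB)$ is bounded in $L^\infty\POmega$, and therefore in $L^{\frac{s}{s-1}}\POmega$. Reflexivity (or weak$^*$ compactness in $L^\infty$) gives a subsequence and some $\RHOB$ with $\RHOPB\rightharpoonup\RHOB$ in $L^{\frac{s}{s-1}}\POmega$, which is \eqref{E328}. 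Since the set $\{0\mI g\mI M\}$ is convex and closed, it is weakly closed, so $0\mI\RHOB\mI M$ a.e.

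\textbf{Step 2 (identification of $\RHOB$).} We treat each region of $\Omega$ separately.

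On $\{\FUNUB<\beta\}$: by \eqref{E327} and Egorov's theorem, for every $\varepsilon>0$ and $\delta>0$ there is a set $E\subset\{\FUNUB<\beta-\delta\}$ with small complement on which $\FUNUPB<\beta$ for $p$ close to $1$. On $E$ this forces $\RHOPB=0$. Testing the weak convergence against $\chi_E g$ with $g\in L^s$ gives $\RHOB=0$ a.e. on $E$. Letting the exceptional measure and then $\delta$ tend to zero yields $\RHOB=0$ a.e. on $\{\FUNUB<\beta\}$.

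On $\{\FUNUB>\beta\}$: the same Egorov argument gives, on large subsets, $\FUNUPB>\beta$ eventually. There $\RHOPB=\FUNUPB^{s-1}$, which converges to $\FUNUB^{s-1}$ strongly in $L^{\frac{s}{s-1}}$ by \eqref{E326} and dominated convergence. By uniqueness of weak limits, $\RHOB=\FUNUB^{s-1}$ there.

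On $\{\FUNUB=\beta\}$: we need $0\mI\RHOB\mI\beta^{s-1}$. Nonnegativity comes from Step 1. For the upper bound, note that a.e. we have the inequality $\RHOPB\mI \max\{\FUNUPB,\beta\}^{s-1}$. The right-hand side converges strongly in $L^{\frac{s}{s-1}}$ to $\max\{\FUNUB,\beta\}^{s-1}$. Weak limits preserve a.e. inequalities against strongly convergent majorants: test with nonnegative $\chi_A$ and use that the weak limit of a difference which is $\mI0$ is again $\mI0$. This gives $\RHOB\mI\max\{\FUNUB,\beta\}^{s-1}=\beta^{s-1}$ on $\{\FUNUB=\beta\}$.

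This last inequality also gives an alternative route for the other two regions, via lower and upper envelopes: $\underline f_\beta(\FUNUPB)\mI\RHOPB\mI\overline f_\beta(\FUNUPB)$, combined with upper semicontinuity of the graph.

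\textbf{Main obstacle.} The delicate point is passing from a.e. convergence of $\FUNUPB$ to information about the merely weakly convergent $\RHOPB$ on the strict-inequality sets. The Egorov localization handles this, since it converts the pointwise information into the uniform control needed on sets of nearly full measure, and I will write that step carefully. The boundary set $\{\FUNUB=\beta\}$ is easy by comparison with strongly convergent bounds.
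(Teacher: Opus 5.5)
Your proof is correct. The paper gives no argument of its own here but refers to Lemma 3.5 of \cite{PimentaJunior2024}, and your scheme is essentially the standard argument being cited: a uniform bound and weak compactness for $\rho_{p,\beta}$, identification of $\rho_\beta$ on $\{u_\beta<\beta\}$ and $\{u_\beta>\beta\}$ from the a.e.\ convergence of $u_{p,\beta}$, and the majorant $\rho_{p,\beta}\le\max\{u_{p,\beta},\beta\}^{s-1}$ on $\{u_\beta=\beta\}$. One minor simplification is that Lemma \ref{LEM7} is not needed: \eqref{E326} with $r=s<q^*$ already bounds $\rho_{p,\beta}$ in $L^{\frac{s}{s-1}}(\Omega)$ and gives strong $L^{\frac{s}{s-1}}$ convergence of $u_{p,\beta}^{s-1}$ and of $\max\{u_{p,\beta},\beta\}^{s-1}$, so the lemma holds under exactly its stated hypotheses.
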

\begin{proof}
See \textit{Lemma 3.5} of \cite{PimentaJunior2024}.
\end{proof}

Through the previous lemmas, we have all the necessary framework to analyze and conclude that, indeed, the conditions of Definition \ref{DEFINSOL} are satisfied for each $\beta>0$.

\begin{LEM}
For each $\beta>0$, let $\FUNUPB$ be a solution of Problem \eqref{PP}. There exists a vector field $\CAMZB$, with $\NOE\CAMZB\NODLINF\mI1$, such that, up to a subsequence,
\begin{align}\label{E345}
    \MOE\nabla\FUNUPB\MOD^{p-2}\nabla\FUNUPB\rightharpoonup\CAMZB
\end{align}
in $L^r\PAE\Omega,\CR^N\PAD$ for all $1\mI r<\infty$, as $p\to1^+$. Moreover
\begin{align}\label{eqweakpq}
    \MOE\nabla\FUNUPB\MOD^{q-2}\nabla\FUNUPB\rightharpoonup\MOE\nabla\FUNUB\MOD^{q-2}\nabla\FUNUB
\end{align}
in $L^\frac{q}{q-1}(\Omega,\CR^N)$, as $p\to1^+$.
\end{LEM}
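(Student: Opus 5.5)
The plan is to follow the standard argument from the $1$-Laplacian literature (as in \cite{MolinoSegura2018,PimentaJunior2024}). The first part comes from a uniform $L^r$ bound on $\MOE\nabla\FUNUPB\MOD^{p-2}\nabla\FUNUPB$ for each fixed $r$ and a diagonal argument. The second part comes from uniform boundedness in $L^{q/(q-1)}$ combined with the strong convergence $\nabla\FUNUPB\to\nabla\FUNUB$ a.e., which I expect to be the main obstacle.

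\emph{Step 1 (the field $\CAMZB$).} Fix $1\mI r<\infty$ and take $p$ close enough to $1$ that $r(p-1)<p$. By H\"older's inequality and \eqref{eqbounded3},
\begin{align*}
\INTO\MOE\nabla\FUNUPB\MOD^{(p-1)r}\DX\mI\PAE\INTO\MOE\nabla\FUNUPB\MOD^{p}\DX\PAD^{\frac{(p-1)r}{p}}\MOE\Omega\MOD^{1-\frac{(p-1)r}{p}}\mI C^{\frac{(p-1)r}{p}}\MOE\Omega\MOD^{1-\frac{(p-1)r}{p}}.
\end{align*}
Hence $\NOE\MOE\nabla\FUNUPB\MOD^{p-2}\nabla\FUNUPB\NOD_{L^r}\mI C^{\frac{p-1}{p}}\MOE\Omega\MOD^{\frac1r-\frac{p-1}{p}}$, which is bounded in $p$. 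Taking $r=2,3,\dots$ and a diagonal subsequence $p_n\to1^+$ gives a weak limit $\CAMZB$ that is the same for every $r$; weak limits in different $L^r$ spaces coincide as distributions. For the $L^\infty$ bound, let $r\to\infty$ in the weak lower semicontinuity estimate $\NOE\CAMZB\NOD_{L^r}\mI\liminf_n\NOE\cdot\NOD_{L^r}\mI\MOE\Omega\MOD^{1/r}$. Since the $\liminf$ bound tends to $\MOE\Omega\MOD^{1/r}$, letting $r\to\infty$ yields $\NOE\CAMZB\NODLINF\mI1$.

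\emph{Step 2 (the $q$-term).} By \eqref{eqbounded3}, $\MOE\nabla\FUNUPB\MOD^{q-2}\nabla\FUNUPB$ is bounded in $L^{q/(q-1)}$, so it has a weak limit $\xi$ along a subsequence. The task is to show $\xi=\MOE\nabla\FUNUB\MOD^{q-2}\nabla\FUNUB$. It suffices to prove $\nabla\FUNUPB\to\nabla\FUNUB$ a.e., since boundedness in $L^{q/(q-1)}$ together with a.e. convergence gives weak convergence to the pointwise limit (Vitali, or the Brezis--Lieb-type lemma).

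To get a.e. convergence of the gradients, I would test \eqref{FRACAPP} with $\varphi=\FUNUPB-\FUNUB\in\ESPWUQZ\POmega$ and subtract $\INTO\MOE\nabla\FUNUB\MOD^{q-2}\nabla\FUNUB\cdot\nabla(\FUNUPB-\FUNUB)\DX$, which tends to $0$ by \eqref{CONVER1}. The right-hand side gives
\begin{align*}
\INTO\RHOPB(\FUNUPB-\FUNUB)\DX\to0,
\end{align*}
because $\RHOPB$ is bounded in $L^{s/(s-1)}$ by Lemma \ref{LEM8} (or in $L^\infty$ by Lemma \ref{LEM7}) and $\FUNUPB\to\FUNUB$ strongly in $L^s$ by \eqref{E326}.

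The $p$-term is handled by monotonicity:
\begin{align*}
\INTO\MOE\nabla\FUNUPB\MOD^{p-2}\nabla\FUNUPB\cdot\nabla(\FUNUPB-\FUNUB)\DX\MI\INTO\MOE\nabla\FUNUPB\MOD^p\DX-\INTO\MOE\nabla\FUNUPB\MOD^{p-1}\MOE\nabla\FUNUB\MOD\DX.
\end{align*}
By Young's inequality this is $\MI-\frac1p\INTO\MOE\nabla\FUNUB\MOD^p\DX+\frac{p-1}{p}\cdot 0$, or more crudely it is bounded below by $\frac1p\INTO(\MOE\nabla\FUNUPB\MOD^p-\MOE\nabla\FUNUB\MOD^p)\DX\MI -\frac1p\INTO\MOE\nabla\FUNUB\MOD^p\DX$. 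That is not yet $o(1)$, so I would instead use the lower bound $\INTO\MOE\nabla\FUNUPB\MOD^{p-1}(\MOE\nabla\FUNUPB\MOD-\MOE\nabla\FUNUB\MOD)\DX$ and control the negative part via H\"older with the $L^{p'}$-norm of $\MOE\nabla\FUNUPB\MOD^{p-1}$, which is $\mI C^{(p-1)/p}$, against $\NOE\nabla(\FUNUPB-\FUNUB)\NOD_{L^p}$. This only yields boundedness, so the cleanest route is to split on the set where $\MOE\nabla\FUNUPB\MOD\MI\MOE\nabla\FUNUB\MOD$: there the integrand is nonnegative. On the complement, the integrand is bounded by $\MOE\nabla\FUNUB\MOD^{p}\mI 1+\MOE\nabla\FUNUB\MOD^q$, which is integrable uniformly in $p$; its contribution should vanish because the factor $\MOE\nabla\FUNUPB\MOD^{p-1}\mI\MOE\nabla\FUNUB\MOD^{p-1}$ while the other factor is controlled by $L^1$ smallness. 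This is the delicate point, and as a fallback I would use the truncation argument of \cite{FigueiredoPimentaWinkert2025}, testing with $T_k(\FUNUPB-\FUNUB)$.

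Once the $p$-term's $\liminf$ is $\MI 0$, the $(S_+)$ property of $-\QLAPLA$ gives
\begin{align*}
\limsup\INTO(\MOE\nabla\FUNUPB\MOD^{q-2}\nabla\FUNUPB-\MOE\nabla\FUNUB\MOD^{q-2}\nabla\FUNUB)\cdot\nabla(\FUNUPB-\FUNUB)\DX\mI0.
\end{align*}
Hence $\FUNUPB\to\FUNUB$ strongly in $\ESPWUQZ\POmega$, so the gradients converge a.e. along a subsequence, and \eqref{eqweakpq} follows.
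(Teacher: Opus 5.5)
Your Step 1 is the paper's argument. For the $q$-term the paper does not use strong convergence: it quotes \cite{BocardoMurat} for $\nabla u_{p,\beta}\to\nabla u_\beta$ a.e.\ and then \cite{Kavian} (boundedness in $L^{q/(q-1)}$ plus a.e.\ convergence). Your $(S_+)$ route is a legitimate alternative. Once completed, it does not rely on \cite{BocardoMurat}, which is stated for a fixed Leray--Lions operator rather than the $p$-dependent operators $-\Delta_p-\Delta_q$, and it even gives $u_{p,\beta}\to u_\beta$ strongly in $W^{1,q}_0(\Omega)$.

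\textbf{The gap.} As written, the proof breaks exactly where you flag it: you never establish
\[
\liminf_{p\to1^+}A_p\ge 0,\qquad A_p:=\int_\Omega|\nabla u_{p,\beta}|^{p-2}\nabla u_{p,\beta}\cdot\nabla(u_{p,\beta}-u_\beta)\,dx .
\]
The splitting argument does not work. The negative part $\int_{\{|\nabla u_{p,\beta}|<|\nabla u_\beta|\}}|\nabla u_{p,\beta}|^{p-1}(|\nabla u_\beta|-|\nabla u_{p,\beta}|)\,dx$ need not vanish. Weak convergence of the gradients gives no $L^1$-smallness of $(|\nabla u_\beta|-|\nabla u_{p,\beta}|)^+$: gradients oscillating between $\tfrac12 e_1$ and $\tfrac32 e_1$ with weak limit $e_1$ keep this part of size about $|\Omega|/4$. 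Proving that it vanishes would require the a.e.\ gradient convergence you are trying to obtain, and the truncation fallback is only named.

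\textbf{How to close it.} The bound you discarded already suffices once you add lower semicontinuity of the total variation. Young's inequality gives
\[
A_p\ge\frac1p\int_\Omega|\nabla u_{p,\beta}|^p\,dx-\frac1p\int_\Omega|\nabla u_\beta|^p\,dx .
\]
The last integral tends to $\int_\Omega|\nabla u_\beta|\,dx$ by dominated convergence, since $|\nabla u_\beta|^p\le 1+|\nabla u_\beta|^q$. For the first integral, note that
\[
\int_\Omega|\nabla u_{p,\beta}|\,dx\le\frac1p\int_\Omega|\nabla u_{p,\beta}|^p\,dx+\frac{p-1}{p}|\Omega| .
\]
The map $v\mapsto\int_\Omega|\nabla v|\,dx$ is convex and continuous, hence weakly lower semicontinuous along $u_{p,\beta}\rightharpoonup u_\beta$ in $W^{1,q}_0(\Omega)$. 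Therefore $\liminf_{p\to1^+}\frac1p\int_\Omega|\nabla u_{p,\beta}|^p\,dx\ge\int_\Omega|\nabla u_\beta|\,dx$; this is the same device the paper uses in the proof of Lemma \ref{LEM10}. Hence $\liminf A_p\ge 0$.

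Alternatively, Step 1 with $r=q/(q-1)$ gives $\int_\Omega|\nabla u_{p,\beta}|^{p-2}\nabla u_{p,\beta}\cdot\nabla u_\beta\,dx\to\int_\Omega\mathbf{z}_\beta\cdot\nabla u_\beta\,dx\le\int_\Omega|\nabla u_\beta|\,dx$. Combined with the lower bound on $\int_\Omega|\nabla u_{p,\beta}|^p\,dx$, this yields the same conclusion.

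With this inserted, your treatment of $\int_\Omega\rho_{p,\beta}(u_{p,\beta}-u_\beta)\,dx\to0$ and the final step go through. Note that the inequality $\limsup\le 0$ is the \emph{input} of the $(S_+)$ property, obtained from the equation, and strong convergence is its output.
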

\begin{proof}
Let us first show that $\PAE \MOE\nabla\FUNUPB\MOD^{p-2}\nabla\FUNUPB\PAD_{p>1}$ is bounded in in $L^r\PAE\Omega,\CR^N\PAD$, with $1\mI r<\infty$. Given $1\mI r<\infty$, consider $p > 1$ sufficiently close to $1$, such that $r<\frac{p}{p-1}$. Then
\begin{align*}
    \NOE\MOE\nabla\FUNUPB\MOD^{p-2}\nabla\FUNUPB\NODLR^r
    &=\INTO\MOE\MOE\nabla\FUNUPB\MOD^{p-2}\nabla\FUNUPB\MOD^r\DX\\
    &=\INTO\MOE\nabla\FUNUPB\MOD^{(p-1)r}\DX.
\end{align*}
By applying Hölder’s Inequality with exponents $\frac{p}{(p-1)r}$ and $\frac{p}{p-(p-1)r}$, we obtain
\begin{align*}
    \INTO\MOE\nabla\FUNUPB\MOD^{(p-1)r}\DX\mI\PAE\INTO\MOE\nabla \FUNUPB\MOD^p\DX\PAD^{\frac{(p-1)r}{p}}\MOE\Omega\MOD^{1-\frac{(p-1)r}{p}}.
\end{align*}
Thus, by the uniform control obtained in Expression \eqref{eqbounded3}, it follows that
\begin{align}\label{ELIM}
    \NOE\MOE\nabla\FUNUPB\MOD^{p-2}\nabla\FUNUPB\NODLR\mI C^{\frac{(p-1)}{p}}\MOE\Omega\MOD^{\frac{1}{r}-\frac{(p-1)}{p}}
\end{align}
and consequently, the sequence $\PAE \MOE\nabla\FUNUPB\MOD^{p-2}\nabla\FUNUPB\PAD_{p>1}$ is bounded in $L^r\PAE\Omega,\CR^N\PAD$, with $1\mI r<\infty$. By weak compactness in $L^r\PAE\Omega,\CR^N\PAD$, there exists $\mathbf{z}_{\beta,r}$ such that, up to a subsequence,
\begin{align*}
    \MOE\nabla\FUNUPB\MOD^{p-2}\nabla\FUNUPB\rightharpoonup\mathbf{z}_{\beta,r}
\end{align*}
in $L^r\PAE\Omega,\CR^N\PAD$. Using a diagonal argument, it is possible to show that the limit $\mathbf{z}_{\beta,r}$ does not depend on $r$, that is, as $p \to 1^+$,
\begin{align*}
    \MOE\nabla\FUNUPB\MOD^{p-2}\nabla\FUNUPB\rightharpoonup\CAMZB
\end{align*}
in $L^r\PAE\Omega,\CR^N\PAD$ for all $1\mI r<\infty$. Moreover, using the fact that the norm in $L^r$ is weakly lower semicontinuous and the inequality obtained in Expression \eqref{ELIM}, we obtain
\begin{align*}
    \NOE\CAMZB\NODLINF
    &\mI\liminf\limits_{p\to1^+}\NOE \MOE\nabla\FUNUPB\MOD^{p-2}\nabla\FUNUPB\NODLR\\
    &\mI\liminf\limits_{p\to1^+}\PAE C^{\frac{(p-1)}{p}}\MOE\Omega\MOD^{\frac{1}{r}-\frac{(p-1)}{p}}\PAD\\
    &= \MOE\Omega\MOD^{\frac{1}{r}}
\end{align*}
for all $1\mI r<\infty$. Consequently, by letting $r\to+\infty$, it follows that $\CAMZB\in\ESPLINF(\Omega,\CR^N)$ and $\NOE\CAMZB\NODLINF\mI 1$. 

Moreover, from \cite{BocardoMurat}[Theorem 2.1], 
\begin{align*}
    \nabla\FUNUPB(x)\to\nabla\FUNUB(x)
\end{align*}
for almost every $x\in\Omega$, as $p\to1^+$. Moreover, by the Expression \eqref{eqbounded3}, the sequence $(\MOE\nabla\FUNUPB\MOD^{q-2}\nabla\FUNUPB)_{p\in(1,\PB)}$ is bounded in $L^{\frac{q}{q-1}}(\Omega,\CR^N)$. From \cite{Kavian}[Theorem 4.8], it follows that
\begin{align*}
    \MOE\nabla\FUNUPB\MOD^{q-2}\nabla\FUNUPB\rightharpoonup\MOE\nabla\FUNUB\MOD^{q-2}\nabla\FUNUB
\end{align*}
in $L^\frac{q}{q-1}(\Omega,\CR^N)$, as $p\to1^+$.
\end{proof}

It is important to observe that, from the weak convergence given in Expression \eqref{E345}, we obtain
\begin{align}\label{E346}
    -\PLAPLA\FUNUPB=\DIVE\PAE\MOE\nabla\FUNUPB\MOD^{p-2}\nabla\FUNUPB\PAD\to\DIVE(\CAMZB)\quad\mbox{ in }\mathcal{D}^\prime\POmega,
\end{align}
as $p \to 1^+$. Moreover, by expressions \eqref{FRACAPP}, \eqref{E326}, \eqref{E328} and \eqref{eqweakpq}, applying the Lebesgue Dominated Convergence Theorem, we conclude that
\begin{align}\label{COND1}
    -\DIVE(\CAMZB)-\QLAPLA\FUNUB=\RHOB\quad\mbox{ in }\mathcal{D}^\prime\POmega,
\end{align}
with $\RHOB$ satisfying the Expression \eqref{E329}. Consequently, this expression provides the following weak formulation in the sense of distributions:
\begin{align}\label{EQ001}
    \INTO\CAMZB\cdot\nabla\varphi\DX+\INTO\MOE\nabla\FUNUB\MOD^{q-2}\nabla\FUNUB\cdot\nabla\varphi\DX=\INTO\RHOB\varphi\DX
\end{align}
for all $\varphi\in\ESPCINFZ\POmega$. 

\begin{LEM}\label{LEM10}
For each $\beta>0$, the function $\FUNUB$ and the vector field $\CAMZB$ satisfy
\begin{align}\label{COND2}
    \CAMZB\cdot\nabla\FUNUB=\MOE\nabla\FUNUB\MOD\quad\mbox{ a.e. in }\Omega.
\end{align}
\end{LEM}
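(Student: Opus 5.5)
Since $\NOE\CAMZB\NODLINF\mI1$, we already have $\CAMZB\cdot\nabla\FUNUB\mI\MOE\nabla\FUNUB\MOD$ a.e. in $\Omega$. It therefore suffices to prove the integral reverse inequality
\begin{align*}
    \INTO\MOE\nabla\FUNUB\MOD\DX\mI\INTO\CAMZB\cdot\nabla\FUNUB\DX,
\end{align*}
because a nonnegative function, namely $\MOE\nabla\FUNUB\MOD-\CAMZB\cdot\nabla\FUNUB$, with nonpositive integral vanishes a.e. The plan is to get this from an energy identity for $\FUNUPB$ and pass to the limit $p\to1^+$.

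First, test \eqref{FRACAPP} with $\varphi=\FUNUPB$:
\begin{align*}
    \INTO\MOE\nabla\FUNUPB\MOD^p\DX+\INTO\MOE\nabla\FUNUPB\MOD^q\DX=\INTO\RHOPB\FUNUPB\DX.
\end{align*}
By Young's inequality, $\MOE\nabla\FUNUPB\MOD\mI\frac1p\MOE\nabla\FUNUPB\MOD^p+\frac{p-1}{p}$. By lower semicontinuity of the total variation, together with $\FUNUPB\to\FUNUB$ in $L^1$, we get $\INTO\MOE\nabla\FUNUB\MOD\mI\liminf\INTO\MOE\nabla\FUNUPB\MOD^p$. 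Weak lower semicontinuity of the $L^q$ norm along \eqref{CONVER1} gives $\INTO\MOE\nabla\FUNUB\MOD^q\mI\liminf\INTO\MOE\nabla\FUNUPB\MOD^q$. On the right-hand side, \eqref{E328} together with strong convergence $\FUNUPB\to\FUNUB$ in $L^s$ (from \eqref{E326}, since $s<q^*$) gives $\INTO\RHOPB\FUNUPB\to\INTO\RHOB\FUNUB$. Combining these,
\begin{align*}
    \INTO\MOE\nabla\FUNUB\MOD\DX+\INTO\MOE\nabla\FUNUB\MOD^q\DX\mI\INTO\RHOB\FUNUB\DX.
\end{align*}

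Next, extend \eqref{EQ001} to the test function $\varphi=\FUNUB$. This is allowed by density of $\ESPCINFZ\POmega$ in $\ESPWUQZ\POmega$: every term is continuous in $\varphi$ in the $W^{1,q}_0$ norm, because $\CAMZB\in L^\infty\subset L^{q'}$, $\MOE\nabla\FUNUB\MOD^{q-2}\nabla\FUNUB\in L^{q'}$, and $\RHOB\in L^{s'}$ with $W^{1,q}_0\hookrightarrow L^s$. This gives
\begin{align*}
    \INTO\CAMZB\cdot\nabla\FUNUB\DX+\INTO\MOE\nabla\FUNUB\MOD^q\DX=\INTO\RHOB\FUNUB\DX.
\end{align*}
Subtracting this from the previous inequality yields the desired integral inequality, which closes the argument.

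The main obstacle I expect is the lower semicontinuity step for the $1$-Laplacian part, which must be done carefully. Since $\FUNUB\in\ESPWUQZ\POmega$, its total variation equals $\INTO\MOE\nabla\FUNUB\MOD$. There are no boundary jump terms, because $\FUNUB$ already has zero trace as a $W^{1,q}_0$ function. So weak $W^{1,q}$ convergence gives weak $W^{1,1}$ convergence, and weak lower semicontinuity of the $L^1$ norm of the gradient applies directly. This also avoids the usual BV trace term $\int_{\partial\Omega}\MOE u\MOD d\mathcal H^{N-1}$. Finally, the term $\frac{p-1}{p}\MOE\Omega\MOD$ coming from Young's inequality tends to zero, so it does not affect the limit.
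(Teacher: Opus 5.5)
Your proof is correct. It uses the same mechanism as the paper: compare the energy identity for $u_{p,\beta}$ with the limit equation tested by a $u_\beta$-type function, and control the $1$-Laplacian part with Young's inequality plus lower semicontinuity.

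The difference is that the paper localizes. It tests the limit equation with $u_\beta\varphi$ (after mollifying) and \eqref{PP} with $u_{p,\beta}\varphi$, for every nonnegative $\varphi\in C^1_c(\Omega)$. It must therefore pass to the limit in the cross terms $\int_\Omega u_{p,\beta}|\nabla u_{p,\beta}|^{p-2}\nabla u_{p,\beta}\cdot\nabla\varphi\,dx$ and $\int_\Omega u_{p,\beta}|\nabla u_{p,\beta}|^{q-2}\nabla u_{p,\beta}\cdot\nabla\varphi\,dx$, the latter needing \eqref{eqweakpq} a second time. It also needs lower semicontinuity of the weighted integral $\int_\Omega\varphi|\nabla u|^q\,dx$.

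You test globally with $u_{p,\beta}$ and $u_\beta$. The pointwise bound $\mathbf{z}_\beta\cdot\nabla u_\beta\le|\nabla u_\beta|$ then turns one integral inequality into an a.e. identity. Because $u_\beta\in W^{1,q}_0(\Omega)$, every term of the limit equation is continuous in $W^{1,q}_0$, so density replaces mollification and no boundary terms appear.

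Your route is shorter and avoids the cross terms entirely. The paper's localized form is what survives in a genuine $BV$ setting, where the condition is an identity of measures $(\mathbf{z},Du)=|Du|$ and global testing would bring in boundary contributions. With the $W^{1,q}_0$ regularity available here, nothing is lost by working globally.
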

\begin{proof}
By Lemma \ref{LEM8} we know that $\NOE\CAMZB\NODLINF\mI 1$. Hence, for almost every $x\in\Omega$
\begin{align*}
    \CAMZB(x)\cdot\nabla\FUNUB(x)\mI\MOE\CAMZB(x)\MOD\MOE\nabla\FUNUB(x)\MOD\mI\MOE\nabla\FUNUB(x)\MOD
\end{align*}
and, consequently,
\begin{align}\label{E01LEM8}
    \CAMZB\cdot\nabla\FUNUB\mI\MOE\nabla\FUNUB\MOD\quad\mbox{ a.e. in }\Omega.
\end{align}
We now prove the reverse inequality. It suffices to show that for every nonnegative $\varphi\in\ESPCUC\POmega$
\begin{align}
    \INTO\varphi\CAMZB\cdot\nabla\FUNUB\DX\MI\INTO\varphi\MOE\nabla\FUNUB\MOD\DX.
\end{align}
Since $\FUNUB$ may not be smooth enough, we employ mollifiers. Let $(\eta_\varepsilon)_{\varepsilon>0}$ be a family of mollifiers. Taking $(\FUNUB\varphi)*\eta_\varepsilon$ as a test function in \eqref{COND1}, we obtain
\begin{align*}
    \INTO\CAMZB\cdot\nabla(\FUNUB\varphi*\eta_\varepsilon)\DX=-\INTO\MOE\nabla\FUNUB\MOD^{q-2}\nabla\FUNUB\cdot\nabla(\FUNUB\varphi*\eta_\varepsilon)\DX+\INTO\RHOB(\FUNUB\varphi*\eta_\varepsilon)\DX
\end{align*}
and, consequently,
\begin{align*}
    \INTO\CAMZB\cdot\nabla\FUNUB\varphi*\eta_\varepsilon\DX
    =&-\INTO\MOE\nabla\FUNUB\MOD^q\varphi*\eta_\varepsilon\DX-\INTO\MOE\nabla\FUNUB\MOD^{q-2}\nabla\FUNUB\cdot\nabla\varphi(\FUNUB*\eta_\varepsilon)\DX\\
    &+\INTO\RHOB(\FUNUB\varphi*\eta_\varepsilon)\DX-\INTO\CAMZB\cdot\nabla\varphi(\FUNUB*\eta_\varepsilon)\DX.
\end{align*}
Passing to the limit as $\varepsilon\to0^+$, we obtain
\begin{align}\label{E01LEM10}
    \nonumber\INTO\CAMZB\cdot\nabla\FUNUB\varphi\DX
    =&-\INTO\varphi\MOE\nabla\FUNUB\MOD^q\DX-\INTO\FUNUB\MOE\nabla\FUNUB\MOD^{q-2}\nabla\FUNUB\cdot\nabla\varphi\DX\\
    &+\INTO\RHOB\FUNUB\varphi\DX-\INTO\FUNUB\CAMZB\cdot\nabla\varphi\DX.
\end{align}
On the other hand, taking $\FUNUPB\varphi\in\ESPWUQZ\POmega$ as a test function in Problem \eqref{PP}, we obtain, after rearranging the terms,
\begin{align}\label{E02LEM10}
    \nonumber\INTO\MOE\nabla\FUNUPB\MOD^p\varphi\DX+\INTO\MOE\nabla\FUNUPB\MOD^q\varphi\DX+\INTO\FUNUPB\MOE\nabla\FUNUPB\MOD^{p-2}\nabla\FUNUPB\cdot\nabla\varphi\DX\\+\INTO\FUNUPB\MOE\nabla\FUNUPB\MOD^{q-2}\nabla\FUNUPB\cdot\nabla\varphi\DX=\INTO\RHOPB\FUNUPB\varphi\DX.
\end{align}
Passing to the limit as $p\to1^+$ and using \eqref{E326}, \eqref{E328} and \eqref{E345}, we obtain
\begin{align}\label{E03LEM10}
    \INTO\FUNUPB\MOE\nabla\FUNUPB\MOD^{p-2}\nabla\FUNUPB\cdot\nabla\varphi\DX\to\INTO\FUNUB\CAMZB\cdot\nabla\varphi\DX
\end{align}
and
\begin{align}\label{E04LEM10}
    \INTO\RHOPB\FUNUPB\varphi\DX \to \INTO \rho_\beta u_\beta \varphi\DX.
\end{align}
Finally, by Young's Inequality,
\begin{align*}
    \INTO\varphi\MOE\nabla\FUNUPB\MOD\DX\mI\dfrac{1}{p}\INTO\varphi\MOE\nabla\FUNUPB\MOD^{p}+\dfrac{p-1}{p}\INTO\varphi\DX
\end{align*}
and, taking the lower limit as $p\to1^+$,
\begin{align}\label{E05LEM10}
    \INTO\varphi\MOE\nabla\FUNUPB\MOD\mI\liminf\limits_{p\to1^+}\INTO\varphi\MOE\nabla\FUNUPB\MOD^{p}.
\end{align}
Combining the expressions \eqref{CONVER1}, \eqref{E01LEM10}, \eqref{E02LEM10}, \eqref{E03LEM10}, \eqref{E04LEM10} and \eqref{E05LEM10},
\begin{align*}
    \INTO\varphi\CAMZB\cdot\nabla\FUNUB\MI\liminf\limits_{p\to1^+}\INTO\varphi\MOE\nabla\FUNUPB\MOD^p\DX\MI\INTO\varphi\MOE\nabla\FUNUB\MOD\DX,
\end{align*}
which proves the reverse inequality and completes the proof.
\end{proof}

So far, we have shown that, for each $\beta>0$, there exists $\FUNUB\in\ESPWUQZ\POmega$ for which there is a vector field $\CAMZB\in L^\infty(\Omega,\mathbb{R}^N)$, with $\NOE\CAMZB\NODLINF\mI 1$, such that
\begin{align}\begin{cases}\label{HIPZB}
    -\DIVE\PAE\CAMZB\PAD-\QLAPLA\FUNUB=\RHOB\hspace{1.5cm}\mbox{ in }\mathcal{D'}\POmega,\\
    \hspace{1.64cm}\CAMZB\cdot\nabla\FUNUB=\MOE\nabla\FUNUB\MOD\hspace{0.83cm}\mbox{ a.e. in }\Omega,\\
    \hspace{2.7cm}\FUNUB=0\hspace{1.75cm}\mbox{ on }\partial\Omega,
\end{cases}
\end{align}
where the function $\RHOB$ satisfies, for almost every $x\in\Omega$,
\begin{align}\label{HIPRB}
    \RHOB(x)\in
\begin{cases}
        \{0\},                  &\mbox{ when }\FUNUB(x)<\beta,\\
        [0,\beta^{s-1}],        &\mbox{ when }\FUNUB(x)=\beta,\\
        \{\FUNUB(x)^{s-1}\},   &\mbox{ when }\FUNUB(x)>\beta.
\end{cases}
\end{align}

It remains to show that the solution we obtained is nontrivial, which is ensured by the following proposition:
\begin{PROP}
   For each $\beta>0$, the solution $\FUNUB\in\ESPWUQZ\POmega$ is nontrivial.
\end{PROP}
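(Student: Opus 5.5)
The plan is to argue by contradiction, using the uniform positive lower bound on the mountain pass levels together with the $L^\infty$ bound of Lemma \ref{LEM7}. Suppose $\FUNUB\equiv 0$. Then by \eqref{E326}, $\FUNUPB\to0$ in $L^r\POmega$ for all $1\mI r<q^*$ and a.e. in $\Omega$. The key observation is that the nonlinearity vanishes below level $\beta$, so the approximating solutions must reach height $\beta$ on a set whose measure does not vanish as $p\to1^+$. Because of Lemma \ref{LEM7}, this is incompatible with $L^1$ convergence to zero.

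First step: show that $\RHOPB\not\to 0$ in a quantitative sense. From Lemma \ref{LEMATPM}, the levels $c_{p,\beta}=\FUNPB(\FUNUPB)$ are bounded below by $\alpha_p=\rho\PAE\frac1q\rho^{q-s}-C\PAD+\frac{p-1}{p}\MOE\Omega\MOD\MI\alpha_0>0$, with $\alpha_0$ independent of $p$. Combining this with the identity \eqref{EQ01LEM6}, I would write
\begin{align*}
c_{p,\beta}=\PAE\frac1p-\frac1s\PAD\INTO\MOE\nabla\FUNUPB\MOD^p\DX+\PAE\frac1q-\frac1s\PAD\INTO\MOE\nabla\FUNUPB\MOD^q\DX+\INTO\PAE\frac1s\RHOPB\FUNUPB-F_\beta(\FUNUPB)\PAD\DX+\frac{p-1}{p}\MOE\Omega\MOD .
\end{align*}
Using \eqref{EQ01LEM6}, the gradient terms are bounded by a constant multiple of $\INTO\RHOPB\FUNUPB\DX$. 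The last integral is also at most $\INTO\RHOPB\FUNUPB\DX$, since $F_\beta\MI0$. Hence
\[
\alpha_0\mI C\INTO\RHOPB\FUNUPB\DX+o_p(1),
\]
where $C$ depends only on $p,q,s$ and stays bounded for $p\in(1,\PB)$.

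Second step: bound the same integral from above. By \eqref{DEFINRHO}, $\RHOPB$ vanishes where $\FUNUPB<\beta$, and by Lemma \ref{LEM7}, $0\mI\RHOPB\FUNUPB\mI \CINF^{s}$. Therefore
\[
\INTO\RHOPB\FUNUPB\DX\mI\CINF^{s}\MOE\{\FUNUPB\MI\beta\}\MOD\mI \CINF^{s}\beta^{-1}\INTO\FUNUPB\DX,
\]
and the right-hand side tends to $0$ by \eqref{E326} with $r=1$. This contradicts $\alpha_0>0$ as $p\to1^+$, so $\FUNUB\not\equiv0$.

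The main obstacle I expect is making the lower bound on $c_{p,\beta}$ genuinely uniform in $p$. The constant $\alpha$ from Lemma \ref{LEMATPM} must not degenerate as $p\to1^+$. It does not, since $\rho$ and $C$ there depend only on $q,s,N,\Omega$, and the extra term $\frac{p-1}{p}\MOE\Omega\MOD$ is nonnegative. Note that this added constant appears on both sides and simply cancels in the identity above, where it is an $o_p(1)$ term. One must also check that the mountain pass level satisfies $c_{p,\beta}\MI\alpha$ even though the paths for different $p$ live in different spaces; this follows from the nonsmooth Mountain Pass Theorem applied for each fixed $p$. Alternatively, one could avoid the level altogether and use only \eqref{EQ01LEM6} together with the Sobolev/Hölder estimate from Lemma \ref{LEMATPM}. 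The point there is that $\FUNUPB\neq0$ forces $\NOE\nabla\FUNUPB\NOD_{L^q}^{q}\mI \INTO\RHOPB\FUNUPB\mI\INTO\MOE\FUNUPB\MOD^s\mI C\NOE\nabla\FUNUPB\NOD_{L^q}^{s}$. This gives a uniform lower bound on $\NOE\nabla\FUNUPB\NOD_{L^q}$, and hence on $\INTO\RHOPB\FUNUPB\DX$, leading to the same contradiction. I would keep this as a backup route.
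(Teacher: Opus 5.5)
Your proof is correct. Its first half is the same as the paper's. Both combine the $p$-independent mountain pass lower bound with $c_{p,\beta}\le \frac1p\int_\Omega\rho_{p,\beta}u_{p,\beta}\,dx+o_p(1)$, which follows from $F_\beta\ge0$ and \eqref{EQ01LEM6}. One small point: the constant should read $\rho^s\bigl(\frac1q\rho^{q-s}-C\bigr)$, as in the computation of Lemma \ref{LEMATPM}; you copied the typo, but it does not affect the argument.

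The two proofs differ in how this positivity is carried over to $u_\beta$. The paper passes to the limit $\int_\Omega\rho_{p,\beta}u_{p,\beta}\,dx\to\int_\Omega\rho_\beta u_\beta\,dx$ using the weak convergence of Lemma \ref{LEM8}. It then tests the limit equation \eqref{COND1} with $u_\beta$ and uses Lemma \ref{LEM10} to rewrite this as $\int_\Omega|\nabla u_\beta|\,dx+\int_\Omega|\nabla u_\beta|^q\,dx$. This ties nontriviality to the energy identity of the limit problem and gives a lower bound independent of $\beta$, but it needs the whole limiting machinery. Its final H\"older step is also superfluous, since $\int_\Omega\rho_\beta u_\beta\,dx\ge\alpha>0$ already forces $u_\beta\not\equiv0$.

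You instead bound $\int_\Omega\rho_{p,\beta}u_{p,\beta}\,dx$ from above at level $p$, using \eqref{DEFINRHO}, Lemma \ref{LEM7} and Chebyshev. So you need only strong $L^1$ convergence of $u_{p,\beta}$, and $\rho_\beta$, $\mathbf{z}_\beta$ and the limit equation never enter. The price is reliance on Lemma \ref{LEM7}, which the paper only cites, and a lower bound on $\|u_\beta\|_{L^1}$ of order $\alpha\beta/C_\infty^s$, which degenerates as $\beta\to0^+$.

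Both drawbacks disappear once you note that \eqref{DEFINRHO} gives $\rho_{p,\beta}u_{p,\beta}\le u_{p,\beta}^s$ a.e. Then \eqref{E326} with $r=s<q^*$ yields $\alpha\le\int_\Omega u_\beta^s\,dx$ directly, with no contradiction, no $L^\infty$ bound, and a constant independent of $\beta$.

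Finally, your backup route does not quite avoid the level. You still need $u_{p,\beta}\not\equiv0$, which comes from $c_{p,\beta}>\Phi_{p,\beta}(0)$, although only for each fixed $p$.
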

\begin{proof}
Observe that $\FUNUPB\MI 0$ for almost every $x\in\Omega$, and, by Expression \eqref{E327} and Lemma \ref{LEM7}, $\FUNUB\in\ESPLINF\POmega$ and $\FUNUB(x)\MI 0$ for almost every $x\in\Omega$. Moreover, by Lemma \ref{LEMATPM} and Expression \eqref{FUNUPB}, from the minimax level $c_{p,\beta}$, there exists a constant $\alpha>0$, independent of $p\in(1,\PB)$, such that
\begin{align}\label{NTA}
    \alpha+o_p(1)\mI c_{p,\beta}
\end{align}
when $p\to1^+$. Taking $\varphi=\FUNUPB$ in the weak formulation of Problem \eqref{PP}, we obtain
\begin{align}\label{NTB}
    \INTO\MOE\nabla\FUNUPB\MOD^p\DX+\INTO\MOE\nabla\FUNUPB\MOD^q\DX=\INTO\RHOPB\FUNUPB\DX.
\end{align}
In a similar way to what was done in Lemma \ref{LEM6}, it follows that
\begin{align*}
    \FUNPB(\FUNUPB)-\dfrac{1}{s}\POE\mu_{p,\beta},\FUNUPB\POD
    =&\PAE\dfrac{1}{p}-\dfrac{1}{s}\PAD\INTO\MOE\nabla\FUNUPB\MOD^p\DX+\PAE\dfrac{1}{q}-\dfrac{1}{s}\PAD\INTO\MOE\nabla\FUNUPB\MOD^q\DX\\
    &+\INTO\PAE\dfrac{1}{s}\RHOPB\FUNUPB-F_\beta(\FUNUPB)\PAD\DX+\dfrac{p-1}{p}\MOE\Omega\MOD,
\end{align*}
where $\mu_{p,\beta}\in\partial\FUNPB(\FUNUPB)$. Moreover, by taking $u_{p,\beta}$ as test function in the weak form of \eqref{PP}, we obtain the estimate
\begin{align}\label{NTE}
    c_{p,\beta}=\FUNPB(\FUNUPB)\mI\dfrac{1}{p}\INTO\MOE\nabla\FUNUPB\MOD^p\DX+\dfrac{1}{q}\INTO\MOE\nabla\FUNUPB\MOD^q\DX+o_p(1).
\end{align}
Combining the expressions \eqref{NTA} and \eqref{NTE}, we get
\begin{align*}
    \alpha\mI\liminf\limits_{p\to1^+}\PAE\dfrac{1}{p}\INTO\MOE\nabla\FUNUPB\MOD^p\DX+\dfrac{1}{q}\INTO\MOE\nabla\FUNUPB\MOD^q\DX\PAD
\end{align*}
and since $\frac{1}{p},\frac{1}{q}\mI1$, by Expression \eqref{NTB}, follows that
\begin{align*}
    \dfrac{1}{p}\INTO\MOE\nabla\FUNUPB\MOD^p\DX+\dfrac{1}{q}\INTO\MOE\nabla\FUNUPB\MOD^q\DX\mI\INTO\RHOPB\FUNUPB\DX.
\end{align*}
Consequently,
\begin{align}\label{NTG}
    0<\alpha\mI\liminf\limits_{p\to1^+}\PAE\INTO\RHOPB\FUNUPB\DX\PAD\mI\INTO\RHOB\FUNUB\DX.
\end{align}
Taking $\FUNUB$ as a test function in \eqref{COND1}, we obtain
\begin{align*}
    \INTO\CAMZB\cdot\nabla\FUNUB\DX+\INTO\MOE\nabla\FUNUB\MOD^{q}\DX=\INTO\RHOB\FUNUB\DX.
\end{align*}
Since $\CAMZB\cdot\nabla\FUNUB=\MOE\nabla\FUNUB\MOD$ almost everywhere in $\Omega$, it follows that
\begin{align*}
    \INTO\MOE\nabla\FUNUB\MOD\DX+\INTO\MOE\nabla\FUNUB\MOD^{q}\DX=\INTO\RHOB\FUNUB\DX.
\end{align*}
Substituting this expression into \eqref{NTG}, we obtain
\begin{align*}
    0<\alpha\mI\INTO\MOE\nabla\FUNUB\MOD\DX+\INTO\MOE\nabla\FUNUB\MOD^{q}\DX.
\end{align*}
Since $\FUNUB\in\ESPWUQZ\POmega$, we may apply Hölder’s Inequality to obtain $C>0$ such that
\begin{align*}
    \INTO\MOE\nabla\FUNUB\MOD\DX\mI\MOE\Omega\MOD^{\frac{q-1}{q}}\NOE\nabla\FUNUB\NODLQ
\end{align*}
and, consequently, 
\begin{align*}
    \INTO\MOE\nabla\FUNUB\MOD\DX+\INTO\MOE\nabla\FUNUB\MOD^q\mI C\NOE\nabla\FUNUB\NODLQ+\NOE\nabla\FUNUB\NODLQ^q.
\end{align*}
Finally, since $\NOE\FUNUB\NODWUQZ=\NOE\nabla\FUNUB\NODLQ$, we get
\begin{align*}
    0<\alpha\mI C\NOE\FUNUB\NODWUQZ+\NOE\FUNUB\NODWUQZ^q
\end{align*}
and then $\FUNUB\neq 0$.
\end{proof}
\section{Behavior of solutions as $\beta\to0^+$}\label{S04}

In this section we are going to prove Theorem \ref{THEO2}. To this end, for each $\beta>0$, we consider the functional $I_\beta:\ESPWUQZ\POmega\to\CR$ given by
\begin{align*}
    I_\beta(u):=\INTO\MOE\nabla u\MOD+\dfrac{1}{q}\INTO\MOE\nabla u\MOD^q\DX-\INTO F_\beta(u)\DX.
\end{align*}

Our goal now is to compare the limit energy $I_\beta(\FUNUB)$ with the approximate energy $I_{p,\beta}(\FUNUPB)$, showing that the energy of the limit solution coincides with the limit of the energies of the approximating solutions.

Since $\FUNUB$ is a solution of the limit problem, that is, the Problem \eqref{P}, by the first and second equations in \eqref{HIPZB} and by Green’s Formula, we obtain
\begin{align*}
    -\INTO\FUNUB\hspace{0,1cm}\DIVE(\CAMZB)\DX=
    \INTO\CAMZB\cdot\nabla\FUNUB\DX=
    \INTO\MOE\nabla\FUNUB\MOD\DX
\end{align*}
and, consequently,
\begin{align*}
    \INTO\MOE\nabla\FUNUB\MOD\DX+\INTO\MOE\nabla\FUNUB\MOD^q\DX=\INTO\RHOB\FUNUB\DX.
\end{align*}
On the other hand, taking $\FUNUPB$ as a test function in Problem \eqref{PP}, we obtain
\begin{align*}
    \INTO\MOE\nabla\FUNUPB\MOD^p\DX+\INTO\MOE\nabla\FUNUPB\MOD^q\DX=\INTO\RHOPB\FUNUPB\DX.
\end{align*}
Since, up to a subsequence, that the convergences given in expression \eqref{CONVER1} and \eqref{E326} hold as $p\to1^+$, it follows that
\begin{align*}
    \INTO\RHOB\FUNUB\DX=\INTO\RHOPB\FUNUPB\DX+o_p(1).
\end{align*}
Consequently, from the three expressions obtained above, we can conclude that
\begin{align*}
    \INTO\MOE\nabla\FUNUB\MOD\DX+\INTO\MOE\nabla\FUNUB\MOD^q\DX=\INTO\MOE\nabla\FUNUPB\MOD^p\DX+\INTO\MOE\nabla\FUNUPB\MOD^q\DX+o_p(1).
\end{align*}

Moreover, by the lower semicontinuity of the norm with respect to weak convergence, we obtain that
\begin{align*}
    \INTO\MOE\nabla\FUNUB\MOD\DX=\INTO\MOE\nabla\FUNUPB\MOD^p\DX+o_p(1)
\end{align*}
and
\begin{align*}
    \dfrac{1}{q}\INTO\MOE\nabla\FUNUB\MOD\DX=\dfrac{1}{q}\INTO\MOE\nabla\FUNUPB\MOD^q\DX+o_p(1).
\end{align*}
From \eqref{E326} and \eqref{E328}, it follows that
\begin{align*}
    \INTO F_\beta(\FUNUB)\DX=\INTO F_\beta(\FUNUPB)\DX+o_p(1).
\end{align*}

Therefore, by gathering the previous estimates and using the definition of the energy functional $I_\beta$, we conclude that
\begin{align}\label{E44}
    I_\beta(\FUNUB)=I_{p,\beta}(\FUNUPB)+o_p(1).
\end{align}
This equality shows that the energy of the limit solution coincides with the limit of the energies of the approximating solutions.
\newline 

Since we are interested in the behaviour of $\FUNUB$ as $\beta\to0^+$, we shall assume throughout this section that $\beta\in(0,\beta_0)$, for some $\beta_0 > 0$ fixed.
\begin{LEM}\label{LEM11}
    The family $(\FUNUB)_{\beta\in(0,\beta_0)}$ is bounded in $\ESPWUQZ\POmega$.
\end{LEM}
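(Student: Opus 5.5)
We bound the energy $I_\beta(\FUNUB)$ from above, uniformly in $\beta\in(0,\beta_0)$, and then use the Nehari-type identity satisfied by $\FUNUB$ to turn this into a bound on $\NOE\FUNUB\NODWUQZ$. The argument follows the proof of Lemma \ref{LEM6}, with the uniformity in $p$ replaced by uniformity in $\beta$.

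\emph{Step 1: a path independent of $\beta$.} We choose $0<\varphi\in\ESPCINFZ\POmega$ and set $A:=\{x\in\Omega;\ \varphi(x)>\beta_0\}$, with $|A|>0$. Since $F_\beta$ is nonincreasing in $\beta$ on $[0,\infty)$, we have $F_\beta\MI F_{\beta_0}$ there for every $\beta\in(0,\beta_0)$. Hence, by Lemma \ref{lemmabounded1}, for all $t\MI0$,
\begin{align*}
\Phi_{p,\beta}(t\varphi)\mI \Phi_{\PB,\beta_0}(t\varphi).
\end{align*}
As in Lemma \ref{LEMATPM}, we fix $T$ large so that $e:=T\varphi$ satisfies $\Phi_{\PB,\beta_0}(e)<0$. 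Then $e$ serves as the endpoint of the mountain pass for every $p\in(1,\PB)$ and every $\beta\in(0,\beta_0)$. Using the segment $\gamma(t)=te$ in the minimax gives
\begin{align*}
c_{p,\beta}\mI \max_{t\in[0,T]}\Phi_{\PB,\beta_0}(t\varphi)=:M,
\end{align*}
and $M$ depends on neither $p$ nor $\beta$. By \eqref{E44}, $I_\beta(\FUNUB)=\lim_{p\to1^+}I_{p,\beta}(\FUNUPB)$. Since $I_{p,\beta}$ and $\FUNPB$ differ by $\frac{p-1}{p}|\Omega|\to0$, we obtain $I_\beta(\FUNUB)\mI M$.

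\emph{Step 2: the Nehari identity.} From \eqref{HIPZB} and the computation at the start of this section,
\begin{align*}
\INTO\MOE\nabla\FUNUB\MOD\DX+\INTO\MOE\nabla\FUNUB\MOD^q\DX=\INTO\RHOB\FUNUB\DX.
\end{align*}
By \eqref{HIPRB} and the argument of \eqref{eqbounded31}, $\INTO\PAE\frac1s\RHOB\FUNUB-F_\beta(\FUNUB)\PAD\DX\MI0$. Therefore
\begin{align*}
M\MI I_\beta(\FUNUB)-\frac1s\INTO\RHOB\FUNUB\DX\MI \PAE1-\frac1s\PAD\INTO\MOE\nabla\FUNUB\MOD\DX+\PAE\frac1q-\frac1s\PAD\INTO\MOE\nabla\FUNUB\MOD^q\DX.
\end{align*}
Since $s>q$, this yields $\NOE\FUNUB\NODWUQZ^q\mI \frac{qs}{s-q}M$ for all $\beta\in(0,\beta_0)$, which is the claim.

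\emph{Main obstacle.} The delicate point is Step 1: $e$ must be chosen independently of $\beta$, and the passage from $c_{p,\beta}$ to $I_\beta(\FUNUB)$ must be justified. If \eqref{E44} is felt to be shaky, Step 1 can bypass it. Apply the estimate of Lemma \ref{LEM6} with $M$ in place of $c_{\PB,\beta}$. This gives $\INTO|\nabla\FUNUPB|^q\DX\mI \frac{qs}{s-q}M$ uniformly in $p$ and $\beta$. Weak lower semicontinuity under \eqref{CONVER1} then transfers the bound directly to $\FUNUB$.
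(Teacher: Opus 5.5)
Your argument is correct and is essentially the paper's. You obtain a $\beta$-uniform upper bound on $I_\beta(u_\beta)$ explicitly, from a $\beta$-independent endpoint $e=T\varphi$ and the segment $te$; the paper instead cites the comparison $I_\beta(u_\beta)\le I_{\beta_0}(u_{\beta_0})$ from \cite{PimentaJunior2024}. You then combine it, as the paper does, with the Nehari identity and the sign of $\frac1s\rho_\beta u_\beta-F_\beta(u_\beta)$. Your fallback, which bounds $\|u_{p,\beta}\|_{W^{1,q}_0}$ uniformly and uses weak lower semicontinuity, is also valid and avoids \eqref{E44}.

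One slip to fix: the middle term of your Step 2 display should be $I_\beta(u_\beta)-\frac1s\bigl(\int_\Omega|\nabla u_\beta|\,dx+\int_\Omega|\nabla u_\beta|^q\,dx-\int_\Omega\rho_\beta u_\beta\,dx\bigr)=I_\beta(u_\beta)$. As written, with $-\frac1s\int_\Omega\rho_\beta u_\beta\,dx$ alone, the second inequality would require $\int_\Omega F_\beta(u_\beta)\,dx\le 0$.
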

\begin{proof}
Using the same reasoning employed \cite{PimentaJunior2024}[Lemma 4.1], we assert that
\begin{align}\label{E1LEM11}
    I_\beta(\FUNUB)\mI I_{\beta_0}(\FUNUBZ)=:C
\end{align}
for all $\beta\in(0,\beta_0)$. Note that, since $\FUNUB$ is a critical point of $I_\beta$, it follws that
\begin{align*}
    0\in\partial I_\beta(\FUNUB)
\end{align*}
and, consequently, there exists $\RHOB\in\partial F_\beta(\FUNUB)$ such that
\begin{align*}
    Q^\prime(\FUNUB)-\RHOB=0,
\end{align*}
where $Q(u)=\INTO\MOE\nabla u\MOD\DX+\frac{1}{q}\INTO\MOE\nabla u\MOD^q\DX$. Taking the dual product of the above expression with $\FUNUB$, we obtain
\begin{align*}
    \POE Q^\prime(\FUNUB),\FUNUB\POD
    &=\INTO\RHOB\FUNUB\DX\\
    &=\INTO\MOE\nabla\FUNUB\MOD\DX+\INTO\MOE\nabla\FUNUB\MOD^q\DX
\end{align*}
and, consequently,
\begin{align}\label{E2LEM11}
    \INTO\MOE\nabla\FUNUB\MOD\DX+\INTO\MOE\nabla\FUNUB\MOD^q\DX=\INTO\RHOB\FUNUB\DX.
\end{align}
By the definitions of $I_\beta$ and $Q$, we have
\begin{align*}
    I_\beta(\FUNUB)=Q(\FUNUB)-\INTO F_\beta(\FUNUB)\DX
\end{align*}
and, by using the previous identities, we obtain
\begin{align*}
    I_\beta(\FUNUB)
    &=Q(\FUNUB)-\dfrac{1}{s}\POE Q^\prime(\FUNUB)-\RHOB,\FUNUB\POD\\
    &=Q(\FUNUB)-\dfrac{1}{s}\POE Q^\prime(\FUNUB),\FUNUB\POD+\dfrac{1}{s}\INTO\RHOB\FUNUB\DX\\
    &=\INTO\MOE\nabla\FUNUB\MOD\DX+\dfrac{1}{q}\INTO\MOE\nabla\FUNUB\MOD^q\DX-\INTO F_\beta(\FUNUB)\DX-\dfrac{1}{s}\INTO\MOE\nabla\FUNUB\MOD\DX\\
    &\hspace{0,5cm}-\dfrac{1}{s}\INTO\MOE\nabla\FUNUB\MOD^q\DX+\dfrac{1}{s}\INTO\RHOB\FUNUB\DX\\
    &=\PAE 1- \dfrac{1}{s}\PAD\INTO\MOE\nabla\FUNUB\MOD\DX+\PAE\dfrac{1}{q}-\dfrac{1}{s}\PAD\INTO\MOE\nabla\FUNUB\MOD^q\DX\\
    &\hspace{0.5cm}+
    \INTO\PAE\dfrac{1}{s}\RHOB\FUNUB-F_\beta(\FUNUB)\PAD\DX.
\end{align*}
As in \eqref{eqbounded31}, we know that $\frac{1}{s}\RHOB\FUNUB-F_\beta(\FUNUB)\MI 0$, which implies that
\begin{align*}
    I_\beta(\FUNUB)\MI\PAE 1-\dfrac{1}{s}\PAD\INTO\MOE\nabla\FUNUB\MOD\DX+\PAE\dfrac{1}{q}-\dfrac{1}{s}\PAD\INTO\MOE\nabla\FUNUB\MOD^q\DX.
\end{align*}
Since $1<q<s$, it follows from that last expression and from \eqref{E1LEM11} that $(\FUNUB)_{\beta\in(0,\beta_0)}$ is bounded in $\ESPWUQZ\POmega$.
\end{proof}

As a consequence of Lemma \ref{LEM11}, since the embedding $\ESPWUQZ\POmega\hookrightarrow \ESPLR\POmega$ is compact for all $1\mI r<q^*$, there exists a function $\FUNUZ\in\ESPWUQZ\POmega$ such that, up to a subsequence, as $\beta\to0^+$
\begin{align}
    \label{CONVUZ1}\FUNUB&\rightharpoonup\FUNUZ\quad\mbox{ in }\ESPWUQZ\POmega,\\
    \label{CONVUZ2}\FUNUB&\to\FUNUZ\quad\mbox{ in }L^r\POmega,\mbox{ with }1 \leq r < q^*,\\
    \label{CONVUZ3}\FUNUB&\to\FUNUZ\quad\mbox{ a.e. in } \Omega.
\end{align}

Moreover, since $(\FUNUB)_{\beta\in(0,\beta_0)}$ is uniformly bounded in $\ESPWUQZ\POmega$ and the convergence given in the Expression \eqref{E329} hold, the family $(\RHOB)_{\beta\in(0,\beta_0)}$ is bounded in $L^{\frac{s}{s-1}}\POmega$. Hence, using the same reasoning employed in Lemma \ref{LEM8}, we can deduce that there exists $\RHOZ\in L^{\frac{s}{s-1}}\POmega$ such that, up to a subsequence, as $\beta\to 0^+$
\begin{align}
    \label{CONVRHO1}\RHOB&\rightharpoonup\RHOZ\quad\mbox{ in }L^{\frac{s}{s-1}},\\
    \label{CONVRHO2}\RHOB&\to\RHOZ\quad\mbox{ a.e. in } \Omega.
\end{align}
Furthermore, by the definition of $\RHOB$, we have
\begin{align*}
    0\mI\RHOZ(x)\mI\MOE\FUNUZ(x)\MOD^{q-1}
\end{align*}
for almost every $x\in\Omega$.

Finally, we now analyze the behavior of the family of vector fields $(\CAMZB)_{\beta\in(0,\beta_0)}$. By construction, these fields satisfy the uniform bound
\begin{align*}
    \NOE\CAMZB\NODLINF\mI 1
\end{align*}
for all $\beta\in(0,\beta_0)$ and, consequently, the sequence $(\CAMZB)_{\beta\in(0,\beta_0)}$ is bounded in $\ESPLINF(\Omega,\CR^N)$. Since $\ESPLINF(\Omega,\CR^N)$ is the dual space of $L^1(\Omega,\CR^N)$, there exist a vector field $\CAMZZ\in\ESPLINF(\Omega,\CR^N)$ and a subsequence, still denoted by $(\CAMZB)$, such that
\begin{align}\label{CONVRHOZ1}
    \CAMZB\overset{*}{\rightharpoonup}\CAMZZ
\end{align}
in $\ESPLINF(\Omega,\CR^N)$, as $\beta\to 0^+$. This weak-$*$ convergence in $\ESPLINF$ means that
\begin{align*}
    \INTO\CAMZB\cdot\psi\DX\to\INTO\CAMZZ\cdot\psi\DX
\end{align*}
for all $\psi\in L^1(\Omega,\CR^N)$, as $\beta\to0^+$. In particular, for $\varphi\in\ESPCINFC\POmega$,
\begin{align*}
    \INTO\CAMZB\cdot\nabla\varphi\DX\to\INTO\CAMZZ\cdot\nabla\varphi\DX
\end{align*}
as $\beta\to0^+$. Then
\begin{align*}
    \DIVE(\CAMZB)\to\DIVE(\CAMZZ)\quad\mbox{ in }\mathcal{D}^\prime(\Omega).
\end{align*}

Combining this convergence with the fact that $-\DIVE(\CAMZB)=\RHOB$ in $\mathcal{D}^\prime\POmega$ and using the convergences given in expressions \eqref{CONVRHO1} and \eqref{CONVRHO2}, we obtain
\begin{align}\label{COND1UZ}
    -\DIVE(\CAMZZ)=\RHOZ\quad\mbox{ in }\mathcal{D}^\prime\POmega.
\end{align}

With these considerations, we can establish the following result, which characterizes the relationship between the limit function $\FUNUZ$ and the vector field $\CAMZZ$.
\begin{LEM}
The function $\FUNUZ$ and the vector field $\CAMZZ$ satisfy
\begin{align}\label{COND2UZ}
    \CAMZZ\cdot\nabla\FUNUZ=\MOE\nabla\FUNUZ\MOD\quad\mbox{ a.e. in }\Omega.
\end{align}
\end{LEM}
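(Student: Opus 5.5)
Following the strategy of Lemma \ref{LEM10}, the proof splits into two inequalities. Since $\NOE\CAMZZ\NODLINF\mI\liminf_{\beta\to0^+}\NOE\CAMZB\NODLINF\mI 1$ by weak-$*$ lower semicontinuity, we get $\CAMZZ\cdot\nabla\FUNUZ\mI\MOE\nabla\FUNUZ\MOD$ a.e. in $\Omega$. So it suffices to show that
\begin{align*}
\INTO\varphi\,\CAMZZ\cdot\nabla\FUNUZ\DX\MI\INTO\varphi\MOE\nabla\FUNUZ\MOD\DX
\end{align*}
for every nonnegative $\varphi\in\ESPCUC\POmega$.

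First we need the limit equation for $\FUNUZ$, including the $q$-Laplacian term, which \eqref{COND1UZ} as written omits. Passing to the limit in \eqref{EQ001} requires $\MOE\nabla\FUNUB\MOD^{q-2}\nabla\FUNUB\rightharpoonup\MOE\nabla\FUNUZ\MOD^{q-2}\nabla\FUNUZ$ in $L^{\frac{q}{q-1}}(\Omega,\CR^N)$. This follows, as in the $p\to1^+$ step, from a.e. convergence of gradients (Boccardo--Murat, \cite{BocardoMurat}, since the right-hand sides $\RHOB$ are bounded in $L^{\frac{s}{s-1}}\POmega$ and $-\DIVE(\CAMZB)$ is bounded in $W^{-1,\infty}$) together with the bound of Lemma \ref{LEM11} and \cite{Kavian}[Theorem 4.8]. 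This gives
\begin{align*}
-\DIVE(\CAMZZ)-\QLAPLA\FUNUZ=\RHOZ\quad\mbox{ in }\mathcal{D}'\POmega .
\end{align*}
Testing this with $(\FUNUZ\varphi)*\eta_\varepsilon$ and letting $\varepsilon\to0^+$, exactly as in \eqref{E01LEM10}, yields
\begin{align*}
\INTO\varphi\,\CAMZZ\cdot\nabla\FUNUZ\DX=-\INTO\varphi\MOE\nabla\FUNUZ\MOD^q\DX-\INTO\FUNUZ\MOE\nabla\FUNUZ\MOD^{q-2}\nabla\FUNUZ\cdot\nabla\varphi\DX+\INTO\RHOZ\FUNUZ\varphi\DX-\INTO\FUNUZ\CAMZZ\cdot\nabla\varphi\DX .
\end{align*}

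Second, we use the analogous identity for each $\beta$, obtained in the same way from \eqref{HIPZB} together with $\CAMZB\cdot\nabla\FUNUB=\MOE\nabla\FUNUB\MOD$:
\begin{align*}
\INTO\varphi\MOE\nabla\FUNUB\MOD\DX+\INTO\varphi\MOE\nabla\FUNUB\MOD^q\DX=-\INTO\FUNUB\MOE\nabla\FUNUB\MOD^{q-2}\nabla\FUNUB\cdot\nabla\varphi\DX+\INTO\RHOB\FUNUB\varphi\DX-\INTO\FUNUB\CAMZB\cdot\nabla\varphi\DX .
\end{align*}
We then let $\beta\to0^+$ term by term:
\begin{itemize}
\item $\INTO\FUNUB\CAMZB\cdot\nabla\varphi\DX\to\INTO\FUNUZ\CAMZZ\cdot\nabla\varphi\DX$, because $\FUNUB\nabla\varphi\to\FUNUZ\nabla\varphi$ strongly in $L^1$ and $\CAMZB\overset{*}{\rightharpoonup}\CAMZZ$.
\item $\INTO\RHOB\FUNUB\varphi\DX\to\INTO\RHOZ\FUNUZ\varphi\DX$, by the weak--strong pairing of $L^{\frac{s}{s-1}}$ against $L^s$.
\item The $q$-term with $\nabla\varphi$ converges, by the weak convergence of the fluxes and strong $L^q$ convergence of $\FUNUB$.
\end{itemize}
On the left-hand side, weak lower semicontinuity of the convex functionals $u\mapsto\INTO\varphi\MOE\nabla u\MOD\DX$ and $u\mapsto\INTO\varphi\MOE\nabla u\MOD^q\DX$ along $\FUNUB\rightharpoonup\FUNUZ$ in $\ESPWUQZ\POmega$ gives
\begin{align*}
\INTO\varphi\MOE\nabla\FUNUZ\MOD\DX+\INTO\varphi\MOE\nabla\FUNUZ\MOD^q\DX\mI\liminf_{\beta\to0^+}\PAE\INTO\varphi\MOE\nabla\FUNUB\MOD\DX+\INTO\varphi\MOE\nabla\FUNUB\MOD^q\DX\PAD .
\end{align*}
This liminf equals the limit of the right-hand side, which is the right-hand side of the $\FUNUZ$ identity plus $\INTO\varphi\MOE\nabla\FUNUZ\MOD^q\DX$. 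Comparing with the $\FUNUZ$ identity, the $q$-terms cancel and we obtain the desired reverse inequality. Since $\varphi\MI0$ is arbitrary and we already have the pointwise inequality $\CAMZZ\cdot\nabla\FUNUZ\mI\MOE\nabla\FUNUZ\MOD$, equality holds a.e.

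The main obstacle is identifying the weak limit of $\MOE\nabla\FUNUB\MOD^{q-2}\nabla\FUNUB$, that is, obtaining a.e. convergence of gradients as $\beta\to0^+$. I would first try the Boccardo--Murat truncation argument, testing the difference of equations with $T_k(\FUNUB-\FUNUZ)$ and exploiting the monotonicity of the $q$-Laplacian. The troublesome term $\INTO\CAMZB\cdot\nabla T_k(\FUNUB-\FUNUZ)\DX$ is bounded by $\INTO\MOE\nabla T_k(\FUNUB-\FUNUZ)\MOD\DX$, which is small on the set where $\MOE\FUNUB-\FUNUZ\MOD<k$ when $k$ is small. Controlling this carefully is the delicate point.
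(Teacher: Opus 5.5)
\textbf{Gap: the $q$-flux limit is assumed.} Your skeleton is sound: both identities, weak--strong pairings on the right, lower semicontinuity on the left. It is more careful than the paper's proof, which tests with $u_\beta\varphi$ and then passes to the limit in $\int_\Omega\varphi\,\mathbf{z}_\beta\cdot\nabla u_\beta\,dx$. That is a product of two merely weakly convergent sequences. Your proof, however, has a genuine gap exactly where you flag it. The cancellation of the $q$-terms needs $|\nabla u_\beta|^{q-2}\nabla u_\beta\rightharpoonup|\nabla u_0|^{q-2}\nabla u_0$, and neither justification you give works.
\begin{itemize}
\item \emph{Boccardo--Murat.} It requires the right-hand side of $-\Delta_q u_\beta=\rho_\beta+\mathrm{div}\,\mathbf{z}_\beta$ to split into a part converging strongly in $W^{-1,q/(q-1)}(\Omega)$ plus a part bounded in measures. $\mathrm{div}\,\mathbf{z}_\beta$ is neither: it is only bounded and weakly convergent in $W^{-1,q/(q-1)}(\Omega)$, and nothing makes it a measure. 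Boundedness in $W^{-1,\infty}$ does not help.
\item \emph{Truncation.} The bound $\bigl|\int_\Omega\mathbf{z}_\beta\cdot\nabla T_k(u_\beta-u_0)\,dx\bigr|\le\int_{\{|u_\beta-u_0|<k\}}|\nabla(u_\beta-u_0)|\,dx$ does not become small. Since $u_\beta\to u_0$ in measure, that set has almost full measure for each fixed $k$, and smallness of $\nabla(u_\beta-u_0)$ there is what is to be proved. Against the coercivity of the $q$-term it yields only a uniform bound, not the $O(k)$ the scheme needs.
\end{itemize}

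\textbf{Missing idea: monotonicity, not identification.} Let $\xi$ be a weak $L^{q/(q-1)}$-limit of $|\nabla u_\beta|^{q-2}\nabla u_\beta$. Your $u_0$-identity holds verbatim with $\xi$ in place of $|\nabla u_0|^{q-2}\nabla u_0$. On the left of the $\beta$-identity, replace lower semicontinuity of the $q$-term by
\[
\liminf_{\beta\to0^+}\int_\Omega\varphi\,|\nabla u_\beta|^q\,dx\ \ge\ \int_\Omega\varphi\,\xi\cdot\nabla u_0\,dx .
\]
This follows by expanding $\int_\Omega\varphi\bigl(|\nabla u_\beta|^{q-2}\nabla u_\beta-|\nabla u_0|^{q-2}\nabla u_0\bigr)\cdot\nabla(u_\beta-u_0)\,dx\ge0$ for $\varphi\ge0$. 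The $\xi$-terms then cancel and your conclusion follows.

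\textbf{A shorter route.} Test \eqref{HIPZB}, by density, with $u_\beta-u_0\in W^{1,q}_0(\Omega)$.
\begin{itemize}
\item The right side $\int_\Omega\rho_\beta(u_\beta-u_0)\,dx\to0$.
\item The $\mathbf{z}_\beta$-term equals $\int_\Omega|\nabla u_\beta|\,dx-\int_\Omega\mathbf{z}_\beta\cdot\nabla u_0\,dx$ and has $\liminf\ge\int_\Omega(|\nabla u_0|-\mathbf{z}_0\cdot\nabla u_0)\,dx\ge0$.
\item The $q$-term has $\liminf\ge0$ by monotonicity.
\end{itemize}
Hence both terms tend to $0$. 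The first limit gives $\int_\Omega(|\nabla u_0|-\mathbf{z}_0\cdot\nabla u_0)\,dx\le0$, which together with $|\mathbf{z}_0|\le1$ is the lemma. The second gives $\nabla u_\beta\to\nabla u_0$ in $L^q$ by the $(S_+)$ property. That identifies $\xi$ and supplies the $-\Delta_q u_0$ term you rightly noticed is missing from \eqref{COND1UZ}.
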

\begin{proof}
To prove this result, we will employ the same reasoning used in the proof of Lemma \ref{LEM10}. Note that, by passing to the limit as $\beta\to0^+$ in Expression \eqref{E01LEM8} and using the convergence results obtained previously, we deduce that 
\begin{align*}
    \CAMZZ\cdot\nabla\FUNUZ\mI\MOE\nabla\FUNUZ\MOD\quad\mbox{ a.e. in }\Omega.
\end{align*}
We now prove the reverse inequality. It suffices to show that for every nonnegative $\varphi\in\ESPCUC\POmega$,
\begin{align*}
    \INTO\varphi\MOE\nabla\FUNUZ\MOD\DX \leq \INTO\varphi\CAMZZ\cdot\nabla\FUNUZ\DX.
\end{align*}
By taking $\FUNUB\varphi$ as test function in Problem \eqref{HIPZB} (after a regularizing process), we obtain
\begin{align*}
  \int_\Omega \varphi |\nabla u_\beta| \DX = &- \int_\Omega u_\beta {\bf z_\beta}\cdot \nabla\varphi \DX
     - \int_\Omega \varphi |\nabla u_\beta|^q \DX\\
     &  - \int_\Omega u_\beta |\nabla u_\beta|^{q-2}\nabla u_\beta \cdot \nabla \varphi \DX + \int_\Omega \rho_\beta \varphi u_\beta \DX.
\end{align*}
From the last expression and \eqref{E01LEM10},
$$
\int_\Omega \varphi |\nabla u_\beta| \DX = \int_\Omega \varphi {\bf z_\beta}\cdot \nabla u_\beta \DX.
$$
Hence, from lower semicontinuity, \eqref{CONVUZ1} and \eqref{CONVRHOZ1}, it follows that
\begin{align*}
    \INTO\varphi\MOE\nabla\FUNUZ\MOD\DX \leq \INTO\varphi\CAMZZ\cdot\nabla\FUNUZ\DX.
\end{align*}
Since $\varphi\MI 0$ is arbitrary, we conclude that
\begin{align*}
    \CAMZZ\cdot\nabla\FUNUZ\MI\MOE\nabla\FUNUZ\MOD\quad\mbox{ a.e. in }\Omega.
\end{align*}
\end{proof}

The next step in the proof of Theorem \ref{THEO2} is to establish the existence of constants $\mu,\beta_0>0$ such that
\begin{align}\label{E420}
    \MOE\{x\in\Omega;\ \FUNUB(x)>\beta\}\MOD\MI\mu
\end{align}
for all $\beta\in(0,\beta_0)$, ensuring, roughly speaking, that the solutions do not concentrate entirely below the limiting level $\beta$. 

From the variational construction of the approximating solutions $\FUNUPB$, there exists a constant $\alpha>0$, independent of $p\in(1,\PB)$, such that
\begin{align}\label{eqmeasure1}
    \alpha+o_p(1)\mI c_{p,\beta}.
\end{align}
Since $\FUNUPB$ is a weak solution of the Problem \eqref{PP}, we have
\begin{align*}
    \INTO\MOE\nabla\FUNUPB\MOD^q\DX+\INTO\MOE\nabla\FUNUPB\MOD^p\DX=\INTO\RHOPB\FUNUPB\DX.
\end{align*}

On the other hand, using the definition of the functional and the properties of the subdifferential, we obtain
\begin{align}\label{eqmeasure2}
    c_{p,\beta}\mI\dfrac{1}{p}\INTO\RHOPB\FUNUPB\DX+o_p(1).
\end{align}
Combining \eqref{eqmeasure1} and \eqref{eqmeasure2}, 
\begin{align*}
    0<\alpha+o_p(1)\mI\dfrac{1}{p}\INTO\RHOPB\FUNUPB\DX+o_p(1),
\end{align*}
where $\alpha$ is independent of $\beta$ and $p$. Recall that, since $\RHOPB$ satisfies the equations given in \eqref{DEFINRHO}, we have
\begin{align*}
    0\mI\RHOPB(x)\FUNUPB(x)\mI\beta^s+\FUNUPB^s(x)\mathcal{X}_{\{\FUNUPB>\beta\}}(x)
\end{align*}
for almost every $x\in\Omega$. Integrating this inequality over $\Omega$, we obtain
\begin{align*}
    \INTO\RHOPB\FUNUPB\DX\mI\beta^s\MOE\Omega\MOD+\int_{\{\FUNUPB>\beta\}}\FUNUPB^s\DX.
\end{align*}
Thus, combining this estimate with the previously obtained inequality, it follows that
\begin{align*}
    \alpha\mI\dfrac{\beta^s}{p}\MOE\Omega\MOD+\dfrac{1}{p}\int_{\{u_{p,\beta}>\beta\}} u_{p,\beta}^s \DX + o_p(1).
\end{align*}

Now, using the convergences given in expressions \eqref{E326} and \eqref{E327}, we can pass to the limit as $p\to1^+$, ensuring, by the Dominated Convergence Theorem, that
\begin{align*}
    \int_{\{\FUNUPB>\beta\}}\FUNUPB^s\DX\to\int_{\{u_\beta>\beta\}}\FUNUB^s\DX
\end{align*}
and, consequently,
\begin{align}\label{E423}
    \alpha\mI\beta^s\MOE\Omega\MOD+\int_{\{u_\beta>\beta\}}\FUNUB^s\DX.
\end{align}

From Lemma \ref{LEM7} we know that the sequence $(\FUNUPB)$ is uniformly bounded in $\ESPLINF\POmega$. Passing to the limit as $p\to 1^+$ and, using the convergence given in Expression \eqref{E327}, we can conclude that $\FUNUB\in\ESPLINF\POmega$ and 
\begin{align*}
    \NOE\FUNUB\NODLINF\mI\CINF.
\end{align*}
Consequently, there exists a constant $M>0$ such that $0\mI\FUNUB(x)\mI M$ for almost every $x\in\Omega$. Hence
\begin{align*}
    \int_{\{\FUNUPB>\beta\}}\FUNUB^s\DX\mI M^s\MOE\{\FUNUB>\beta\}\MOD.
\end{align*}
Combining the previous inequalities, we obtain
\begin{align*}
    \alpha\mI\beta^s\MOE\Omega\MOD+M^s\MOE\{\FUNUB>\beta\}\MOD.
\end{align*}
Choosing $\beta_0>0$ sufficiently small and defining $\mu=\dfrac{\alpha-\beta_0^s\MOE\Omega\MOD}{M^s}$, we conclude that
\begin{align*}
    \MOE\{x\in\Omega;\ \FUNUB(x)>\beta\}\MOD\MI\mu
\end{align*}
for every $\beta\in(0,\beta_0)$.
\newline
This also implies that $u_0 \not\equiv 0$.

Finally, it remains to show that $\FUNUZ\in\ESPWUQZ\POmega$ is indeed a solution of Problem \eqref{P0}. To that, recall that the functions $\RHOB$ satisfy the pointwise relation
\begin{align*}
    \RHOB(x)\in\QUE\underline{f}_\beta\PAE\FUNUB(x)\PAD,\overline{f}_\beta\PAE\FUNUB(x)\PAD\QUD
\end{align*}
for almost every $x\in\Omega$. Using Expression \eqref{DEFINRHO} and the convergences given in expressions \eqref{CONVUZ2} and \eqref{CONVRHO2}, we conclude that
\begin{align*}
    \RHOZ(x)=\FUNUZ^{s-1}(x)
\end{align*}
for almost every $x\in\Omega$. Therefore, the pair $(\FUNUZ,\RHOZ)$ satisfies the limit equation in the weak sense. In particular, $\FUNUZ$ is a nonnegative and nontrivial solution of the limit Problem \eqref{P0}.


\begin{thebibliography}{00}
\bibitem{AndreuBallesterCasellesMazon1}
F. Andreu, C. Ballester, V. Caselles and 
\newblock{\href{https://doi.org/10.1016/S0764-4442(00)01729-8}{Minimizing total variation flow}},
\newblock\emph{Comptes Rendus de l'Académie des Sciences, Series I} \textbf{331} (2000), no. 11, 867--872.

\bibitem{AndreuBallesterCasellesMazon2}
F. Andreu, C. Ballester, V. Caselles and J. M. Mazón,
\newblock{\href{https://doi.org/10.1006/jfan.2000.3698}{The Dirichlet problem for the total variation flow}},
\newblock\emph{Journal of Functional Analysis} \textbf{180} (2001), no. 2, 347--403.

\bibitem{AndreuBallesterCasellesMazon3}
F. Andreu, C. Ballester, V. Caselles and J. M. Mazón,
\newblock{\href{https://projecteuclid.org/journals/differential-and-integral-equations/volume-14/issue-3/Minimizing-total-variation-flow/die/1356123331.short}{The Dirichlet problem for the total variation flow}},
\newblock\emph{Differential Integral Equations} \textbf{14} (2000), no. 3, 321--360.

\bibitem{Anzellotti1983}
G. Anzellotti,
\newblock{\href{http://doi.org/10.1007/BF01781073}{Pairings between measures and bounded functions and compensated compactness}},
\newblock\emph{Annali di Matematica Pura ed Applicata} \textbf{135} (1983), 293--318.

\bibitem{AparicioOlivaPetitta2025}
A. J. M. Aparicio, F. Oliva and F. Petitta,
\newblock{\href{https://doi.org/10.1007/s00526-025-03102-6}{The Sattinger iteration method for $1$-Laplace type problems and its application to concave-convex nonlinearities}},
\newblock\emph{Calculus of Variations and Partial Differential Equations} \textbf{64} (2025), 251.

\bibitem{ArcoyaCalahorrano1994}
D. Arcoya and M. Calahorrano,
\newblock{\href{https://doi.org/10.1006/jmaa.1994.1406}{Some discontinuous problems with a quasilinear operator}},
\newblock\emph{Journal of Mathematical Analysis and Applications} \textbf{187} (1994), 1059--1072.

\bibitem{AttouchButtazzoMichaille2014}
H. Attouch, G. Buttazzo and G. Michaille,
\newblock{\href{https://epubs.siam.org/doi/book/10.1137/1.9781611973488}{Variational analysis in Sobolev and BV spaces: applications to PDEs and optimization}},
\newblock Society for Industrial and Applied Mathematics, 2014.

\bibitem{BarileFigueiredo2015}
S. Barile and G. M. Figueiredo,
\newblock{\href{https://doi.org/10.1016/j.na.2014.11.002}{Some classes of eigenvalues problems for generalized 
$p\&q$-Laplacian type operators on bounded domains}},
\newblock\emph{Nonlinear Analysis: Theory, Methods \& Applications} \textbf{119} (2015), 457--468.

\bibitem{BocardoMurat}
L. Boccardo and F. Murat,
\newblock{\href{https://doi.org/10.1016/0362-546X(92)90023-8}{Almost everywhere convergence of the gradients of solutions to elliptic and parabolic equations}},
\newblock\emph{Nonlinear Analysis: Theory, Methods \& Applications} \textbf{19} (1992), no. 6, 581--597.

\bibitem{Brezis2011}
H. Brézis,
\newblock{\href{http://doi.org/10.1007/978-0-387-70914-7}{Functional Analysis, Sobolev Spaces and Partial Differential Equations}},
\newblock Springer, New York, 2011.

\bibitem{Chang1981}
K. C. Chang,
\newblock{\href{https://doi.org/10.1016/0022-247X(81)90095-0}{Variational methods for nondifferentiable functionals and their applications to partial differential equations}},
\newblock\emph{Transactions of the American Mathematical Society} \textbf{80} (1981), no. 1, 102--129.

\bibitem{ChataPimentaSegura2023}
J. C. O. Chata, M. T. O. Pimenta and S. Segura de León,
\newblock{\href{https://doi.org/10.1016/j.jmaa.2023.127149}{Existence of solutions to a $1$-Laplacian problem with a concave-convex nonlinearity}},
\newblock\emph{Journal of Mathematical Analysis and Applications} \textbf{525} (2023), no. 2, 127149.

\bibitem{ChavesErcoleMiyagaki2015}
M. F. Chaves, G. Ercole and O. H. Miyagaki,
\newblock{\href{https://doi.org/10.1016/j.na.2014.11.010}{Existence of a nontrivial solution for the $(p,q)$-Laplacian problems in $\mathbb{R}^N$ without the Ambrosetti--Rabinowitz condition}},
\newblock\emph{Nonlinear Analysis: Theory, Methods \& Applications} \textbf{114} (2015), 133--141.

\bibitem{Clarke1975}
F. H. Clarke,
\newblock{\href{https://doi.org/10.1090/S0002-9947-1975-0367131-6}{Generalized gradients and applications}},
\newblock\emph{Transactions of the American Mathematical Society} \textbf{205} (1975), 247--262.

\bibitem{Clarke1983}
F. H. Clarke,
\newblock{\href{https://epubs.siam.org/doi/book/10.1137/1.9781611971309}{Optimization and Nonsmooth Analysis}},
\newblock John Wiley \& Sons, New York, 1983.

\bibitem{Demengel}
F. Demengel,
\newblock{\href{https://www.numdam.org/item/COCV_1999__4__667_0/?source=COCV_1999__4__559_0}{On some nonlinear partial differential equations involving the $1-$Laplacian and critical Sobolev exponent}},
\newblock\emph{ESAIM: Control, Optimisation and Calculus of Variations} \textbf{4} (1999), 667--686.

\bibitem{FariaMiyagakiMotreanu2014}
L. F. O. Faria, O. H. Miyagaki and D. Motreanu,
\newblock{\href{https://www.cambridge.org/core/journals/proceedings-of-the-edinburgh-mathematical-society/article/comparison-and-positive-solutions-for-problems-with-the-p-qlaplacian-and-a-convection-term/C87AFE487E0E6BCE151E9C85B11AA5B9}{Comparison and positive solutions for problems with the $(p,q)$-Laplacian and a convection term}},
\newblock\emph{Proceedings of the Edinburgh Mathematical Society} \textbf{57} (2014), no. 3, 687--698.

\bibitem{FigueiredoPimentaWinkert2025}
G. M. Figueiredo, M. T. O. Pimenta and P. Winkert,
\newblock{\href{https://doi.org/10.1016/j.na.2024.113677}{The asymptotic behavior of constant sign and nodal solutions of $(p,q)$-Laplacian problems as $p \to 1$}},
\newblock\emph{Nonlinear Analysis} \textbf{251} (2025), 113677.

\bibitem{GilbargTrudinger1977}
D. Gilbarg and N. S. Trudinger,
\newblock{\href{https://doi.org/10.1007/978-3-642-61798-0}{Elliptic Partial Differential Equations of Second Order}},
\newblock Springer-Verlag, Berlin, 1977.

\bibitem{GrossinhoTersian2001}
M. R. Grossinho and S. A. Tersian,
\newblock{\href{https://books.google.com.br/books/about/An_Introduction_to_Minimax_Theorems_and.html?id=cxJY8yubIGYC&redir_esc=y}{An Introduction to Minimax Theorems and Their Applications to Differential Equations}},
\newblock Springer, Boston, 2001.

\bibitem{Kavian}
O. Kavian,
\newblock{\href{https://www.researchgate.net/publication/265334045_Introduction_a_la_theorie_des_points_critiques_et_applications_aux_problemes_elliptiques}{Introduction à la théorie des points critiques et applications aux problèmes elliptiques}},
\newblock Springer, Paris, 1993.

\bibitem{MolinoSegura2018}
A. Molino and S. Segura de León,
\newblock{\href{https://doi.org/10.1016/j.na.2017.11.006}{Elliptic equations involving the $1$-Laplacian and a subcritical source term}},
\newblock\emph{Nonlinear Analysis} \textbf{168} (2018), 50--66.

\bibitem{MotreanuWinkert2019}
D. Motreanu and P. Winkert,
\newblock{\href{https://doi.org/10.1016/j.aml.2019.03.023}{Existence and asymptotic properties for quasilinear elliptic equations with gradient dependence}},
\newblock\emph{Applied Mathematics Letters} \textbf{95} (2019), 78--84.

\bibitem{PapageorgiouWinkert2021}
N. S. Papageorgiou and P. Winkert,
\newblock{\href{https://doi.org/10.1007/s00009-021-01780-y}{Singular Dirichlet $(p,q)$-equations}},
\newblock\emph{Mediterranean Journal of Mathematics} \textbf{18} (2021), 141.

\bibitem{PimentaCarranzaFigueiredo2024}
M. T. O. Pimenta, Y. B. C. Carranza and G. M. Figueiredo,
\newblock{\href{https://doi.org/10.1007/s11784-024-01138-3}{Existence results for the $1$-Laplacian problem with a critical concave--convex nonlinearity}},
\newblock\emph{Journal of Fixed Point Theory and Applications} \textbf{26} (2024), 49.

\bibitem{PimentaJunior2024}
M. T. O. Pimenta, G. C. G. dos Santos and J. R. Santos Júnior,
\newblock{\href{https://doi.org/10.1017/prm.2022.86}{On a quasilinear elliptic problem involving the $1$-Laplacian operator and a discontinuous nonlinearity}},
\newblock\emph{Proceedings of the Royal Society of Edinburgh} \textbf{154} (2024), no. 1, 33--59.

\bibitem{PimentaSantosStapenhorst2024}
M. T. O. Pimenta, J. R. Santos Júnior and M. F. Stapenhorst,
\newblock{\href{https://doi.org/10.1016/j.jde.2024.05.014}{A $1$-Laplacian equation with critical and discontinuous nonlinearities}},
\newblock\emph{Journal of Differential Equations} \textbf{402} (2024), 463--494.

\bibitem{SantosFigueiredoPimenta2022}
G. C. G. Santos, G. M. Figueiredo and M. T. O. Pimenta,
\newblock{\href{https://doi.org/10.1007/s12220-022-00881-8}{Multiple ordered solutions for a class of problems involving the $1$-Laplacian operator}},
\newblock\emph{The Journal of Geometric Analysis} \textbf{32} (2022), 140.

\end{thebibliography}
\end{document}